\numberwithin{equation}{section}
\newcommand{\field}[1]{\mathbb{#1}}
\newcommand{\N}{\field{N}}
\newcommand{\R}{\field{R}}
\newcommand{\updim}{\overline{\dim}}
\newcommand{\lowdim}{\underline{\dim}}
\newcommand{\smalli}{\mathtt{i}}
\newcommand{\norm}[1]{\left|\left|#1\right|\right|}
\newcommand{\diam}{\mathrm{diam}}
\newcommand{\dima}{\dim_{\mathrm{A}}}
\newcommand{\iii}{\mathbf{i}}
\newcommand{\jjj}{\mathbf{j}}
\newcommand{\NN}{\mathcal{N}}
\DeclareMathOperator*{\essinf}{ess\,inf}
\theoremstyle{plain}
\newtheorem{thm}{Theorem}[section]
\newtheorem{lemma}[thm]{Lemma}       
\newtheorem{cor}[thm]{Corollary}      
\newtheorem{prop}[thm]{Proposition}
\theoremstyle{definition}
\newtheorem{quest}{Question}
\newtheorem*{quest*}{Question}
\theoremstyle{remark}
\newtheorem{huom}[thm]{Remark}   
\newtheorem{example}[thm]{Example}
\title{Pointwise Assouad dimension for measures}
\author{Roope Anttila}
\address{Research Unit of Mathematical Sciences\\  P.O.Box 8000, FI-90014, University of Oulu, Finland }
\email{roope.anttila@oulu.fi}
\date{\today}
\subjclass[2020]{primary: 28A80; secondary: 28C15}
\keywords{pointwise doubling measure, pointwise Assouad dimension, quasi-Bernoulli measure, self-conformal set, place-dependent probabilities}
\begin{document}

\begin{abstract}
    We introduce a pointwise variant of the Assouad dimension for measures on metric spaces, and study its properties in relation to the global Assouad dimension. We show that, in general, the value of the pointwise Assouad dimension may differ from the global counterpart, but in many classical cases, the pointwise Assouad dimension exhibits similar exact dimensionality properties as the classical local dimension, namely it equals the global Assouad dimension almost everywhere. We also prove an explicit formula for the Assouad dimension of certain invariant measures with place dependent probabilities supported on self-conformal sets.
\end{abstract}

\maketitle
\section{Introduction}
Originally, the Assouad dimension was defined as a means to investigate embedding problems of metric spaces \cite{A}, and it is still used as an important tool in the field \cite{T}. In the past decade, the interest in the Assouad dimension has seen a substantial increase also in fractal geometry, and various Assouad-type dimensions have been developed and studied in many classical cases. The book by Fraser \cite{F} collects the recent developments in one place and provides an introduction to Assouad dimensions in fractal geometry. The Assouad dimension describes the local structure of the space by quantifying the size of the thickest parts of the space across all scales, which provides a heuristic on why it is effective in the study of embedding problems: if the space has locally thick parts, it can not be embedded into a small space.

As is usual in dimension theory, the Assouad dimension of a space is closely connected to a dual notion of dimension for measures supported on the space. For a finite Borel measure $\mu$ fully supported on a metric space $X$, this \emph{Assouad dimension of the measure} is defined by
\begin{align}\label{eq:global_assouad}
    \dima \mu=\inf\bigg\{ s>0\colon& \exists C>0,\textnormal{ s.t. for all }x\in X,\; 0<r<R,\nonumber\\
    &\frac{\mu(B(x,R))}{\mu(B(x,r))}\leq C\left(\frac{R}{r}\right)^s\bigg\},
\end{align}
where $B(x,r)$ is the closed ball with center $x$ and radius $r$. The Assouad dimension of a measure has a similar intuition behind it as the Assouad dimension of a space: it quantifies the size of the least regular parts of the measure across all scales. 

Perhaps the most important concepts in the dimension theory of measures are the \emph{upper and lower local dimensions of a measure} defined at $x\in X$ by
\begin{equation*}
    \updim_{\textnormal{loc}}(\mu,x)=\limsup_{r\to 0}\frac{\log \mu(B(x,r))}{\log r},
\end{equation*}
and
\begin{equation*}
    \lowdim_{\textnormal{loc}}(\mu,x)=\liminf_{r\to 0}\frac{\log \mu(B(x,r))}{\log r},
\end{equation*}
respectively. When the upper and lower limits agree, the limit is denoted by $\dim_{\textnormal{loc}}(\mu,x)$ and it is called the \emph{local dimension of the measure $\mu$ at $x$}. Unlike the different notions of ``global'' dimension, which are concerned with the average regularity (e.g. in the case of Hausdorff, packing and Minkowski dimensions) or extremal regularity (in the case of Assouad and lower dimensions) of the measure on its full support, these pointwise dimensions quantify the regularity of the measure around a given point. The upper and lower local dimensions can be thought of as the pointwise analogue of the Hausdorff and packing dimensions of the measure, respectively, and in this paper, we widen the theory by defining a natural pointwise analogue of the Assouad dimension. We call this dimension the \emph{pointwise Assouad dimension} of the measure and define it at $x\in X$ by
\begin{align*}
    \dima (\mu,x)=\inf\bigg\{ s>0\colon& \exists C(x)>0,\textnormal{ s.t. for all }\; 0<r<R,\\
    &\frac{\mu(B(x,R))}{\mu(B(x,r))}\leq C(x)\left(\frac{R}{r}\right)^s\bigg\}.
\end{align*}
The crucial difference to the global Assouad dimension of the measure is that the constant $C$ in the definition may depend on the point $x$. Similarly as the Assouad dimension captures information on the least regular parts of the measure across all scales, the pointwise Assouad dimension quantifies the least regular scales at a given point. The aim of this paper is to discuss the basic properties of the pointwise Assouad dimension from a fractal geometric point of view.

We note that the ideas in the definition are not entirely new, as was pointed out to us by Anders and Jana Björn as well as the anonymous referee. In \cite{BBL}, the authors consider certain ``exponent sets'', one of which corresponds to our definition of the pointwise Assouad dimension, and use them to give sharp estimates for variational $p$-capacities of annuli in metric spaces. The authors give some examples about the behaviour of these exponent sets, however, most of the measures they consider are absolutely continuous, which by the Lebesgue differentiation theorem can have ``non-trivial'' (that is different from the dimension of the Lebesgue measure) pointwise Assouad dimension only in a set of measure zero. In contrast, the measures we study are singular and, as we will see, their pointwise Assouad dimensions have ``non-trivial'' behaviour almost everywhere. Therefore, we believe that the present article is a welcome contribution to the theory from a fractal geometric point of view, and that the results in this paper and \cite{BBL} complement each other quite well.

\subsection{Main results and the structure of the paper}
The structure of the paper is as follows. We begin by establishing some notation and recalling basic results concerning Assouad dimensions of sets and measures in Section \ref{sec:prelim}. In Section \ref{sec:pw_assouad} we discuss some basic properties of the pointwise Assouad dimension and its relations to various existing notions of dimension. In particular, we observe that the pointwise Assouad dimension is always bounded from above by the global Assouad dimension, and that the inequality can be strict. To contrast this, we devote the rest of the paper to the study of the cases where the maximal pointwise Assouad dimension coincides with the global one.
In fact, we show that in many classical constructions we have an \emph{exact dimensionality property}, meaning
\begin{equation}\label{eq:exact_dimensionality}
    \dima(\mu,x)= \dima\mu,
\end{equation}
for $\mu$-almost every $x$. This is analogous to the classical exact dimensionality property for the local dimension, which the measure is said to satisfy if $\dim_{\mathrm{loc}}(\mu,x)=\dim_{\mathrm{H}}\mu$, for $\mu$-almost every $x$.  We start with the general setting of quasi-Bernoulli measures supported on strongly separated self-conformal sets in Section \ref{sec:qb-meas},  and in Theorem \ref{thm:quasi-bernoulli-exact-assouad} prove the exact dimensionality property (\ref{eq:exact_dimensionality}) for these measures. The results are complemented in in Section \ref{sec:invmeas}, where we provide an explicit formula for the Assouad dimension of certain invariant measures for place dependent probabilities in Theorem \ref{thm:pw_ssc_formula}. These place dependent invariant measures satisfy the assumptions of Section \ref{sec:qb-meas} so as a corollary we obtain the almost sure formula for the pointwise Assouad dimension as well. In the final Section \ref{sec:ssm} we are interested in self-similar and self-affine measures. In Theorems \ref{thm:osc_formula} and \ref{thm:bm_exact_dimensionality}, we prove the exact dimensionality property (\ref{eq:exact_dimensionality}) for self-similar measures satisfying the open set condition and self-affine measures on certain Bedford-McMullen carpets, respectively.

\section{Preliminaries}\label{sec:prelim}
Unless stated otherwise, we assume that $(X,d)$ is a metric space, with no additional structure. Since we assume the metric $d$ to be fixed, we omit it from the notation and refer to $(X,d)$ simply as $X$. Unless stated otherwise, a measure always refers to a finite Borel measure fully supported on $X$ and when needed, we denote the support of $\mu$ by $\mathrm{supp}(\mu)$. If $f:X\to Y$ is a map from $X$ to another metric space $Y$, we denote the \emph{pushforward of the measure $\mu$} under the map $f$ by $f_*\mu\coloneqq\mu\circ f^{-1}$.  For constants $C$, we sometimes use the convention $C(\cdots)$, if we want to emphasize the dependence of $C$ on the quantities inside the parentheses.

\subsection{Assouad dimension of sets and connection to weak tangents}
We are mainly focused on dimensions of measures in this paper, but to place the results in a wider context, we recall some results concerning the Assouad dimensions of sets. The \emph{Assouad dimension of a set} $F\subset X$ is defined by
\begin{align*}
    \dima F=\inf\bigg\{ s>0\colon& \exists C>0,\textnormal{ s.t. for all }x\in F,\; 0<r<R,\\
    &N_r(B(x,R)\cap F)\leq \left(\frac{R}{r}\right)^s\bigg\},
\end{align*}
where $N_r(E)$ denotes the smallest number of open balls of diameter $r$ needed to cover the set $E\subset X$.  A convenient way to bound the Assouad dimension of a set from below is given by the weak tangent approach. Recall that a map $T\colon X\to X$ is a \emph{similarity} if there exists $c>0$, such that $d(T(x),T(y))=c d(x,y)$, for all $x,y\in X$. The constant $c$ is called the \emph{similarity ratio (of $T$)}. 

For simplicity we give the definition of weak tangents when $X\subset \R^d$ and make a brief remark that they can be defined in complete metric spaces using pointed convergence in the Gromov-Hausdorff distance \cite{KL,MT}.  A closed set $F\subset B(0,1)$ is said to be a \emph{weak tangent} of a compact set $X\subset \R^d$ if there is a sequence of similarities $T_n\colon \R^d\to\R^d$, such that
\begin{equation*}
    T_n(X)\cap B(0,1)\to F,
\end{equation*}
in the Hausdorff distance. The collection of weak tangents of $X$ is denoted by $\mathrm{Tan}(X)$. The following proposition gives an easy way to bound the Assouad dimension from below. For the proof in the general setting see e.g. \cite[Proposition 6.1.5]{MT}.

\begin{prop}\label{prop:weaktangent}
If $X\subset \R^d$ is compact, then $\dim_{\mathrm{A}} X\geq \dim_{\mathrm{A}}F$, for all $F\in\mathrm{Tan}(X)$.
\end{prop}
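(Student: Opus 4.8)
The plan is to combine two facts: the Assouad dimension is invariant under similarities, and, at a \emph{fixed} scale, covering numbers are upper semicontinuous along Hausdorff-convergent sequences, up to an arbitrarily small perturbation of that scale. Concretely, I would fix $s>\dim_{\mathrm{A}}X$ together with a constant $C>0$ witnessing $N_r(B(x,R)\cap X)\le C(R/r)^s$ for all $x\in X$ and $0<r<R$, take $F\in\mathrm{Tan}(X)$ realised by similarities $T_n\colon\R^d\to\R^d$ of ratios $c_n$ with $T_n(X)\cap B(0,1)\to F$ in the Hausdorff distance, and aim to show $N_r(B(y,R)\cap F)\le C(R/r)^s$ for every $y\in F$ and $0<r<R$; this gives $\dim_{\mathrm{A}}F\le s$, and letting $s\downarrow\dim_{\mathrm{A}}X$ finishes the proof.

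The main step is a perturbation argument. Fixing $y\in F$, $0<r<R$ and a small parameter $0<\delta<r/2$, Hausdorff convergence supplies, for all large $n$, a point $y_n\in T_n(X)\cap B(0,1)$ with $d(y,y_n)<\delta$ and ensures that $F$ lies in the $\delta$-neighbourhood of $T_n(X)$. A triangle-inequality computation then gives that $B(y,R)\cap F$ is contained in the $\delta$-neighbourhood of $A_n\coloneqq T_n(X)\cap B(y_n,R+2\delta)$: if $z\in B(y,R)\cap F$ and $z'\in T_n(X)$ with $d(z,z')<\delta$, then $d(z',y_n)<R+2\delta$, so $z'\in A_n$. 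Hence, enlarging any cover of $A_n$ by open balls of diameter $r-2\delta$ to concentric balls of diameter $r$ produces a cover of $B(y,R)\cap F$, so $N_r(B(y,R)\cap F)\le N_{r-2\delta}(A_n)$.

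It then remains to transfer the covering estimate back to $X$. Writing $y_n=T_n(x_n)$ with $x_n\in X$ and using that $T_n$ sends $B(x_n,\rho)$ onto $B(y_n,c_n\rho)$, one has $A_n=T_n\big(X\cap B(x_n,(R+2\delta)/c_n)\big)$; since $T_n^{-1}$ is a similarity of ratio $1/c_n$, covering numbers rescale by $c_n$, and therefore
\[ N_{r-2\delta}(A_n)=N_{(r-2\delta)/c_n}\big(X\cap B(x_n,(R+2\delta)/c_n)\big)\le C\left(\frac{(R+2\delta)/c_n}{(r-2\delta)/c_n}\right)^{\!s}=C\left(\frac{R+2\delta}{r-2\delta}\right)^{\!s}, \]
the inequality being the defining bound for $X$ at the admissible scales $0<(r-2\delta)/c_n<(R+2\delta)/c_n$. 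The similarity ratio $c_n$ cancels, so its possibly degenerate behaviour plays no role. Combining the two steps yields $N_r(B(y,R)\cap F)\le C\big((R+2\delta)/(r-2\delta)\big)^s$ for all $\delta\in(0,r/2)$, and since the left-hand side is independent of $\delta$, letting $\delta\to0^+$ gives the desired bound $N_r(B(y,R)\cap F)\le C(R/r)^s$.

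I expect the only genuinely delicate point to be the bookkeeping in the perturbation step: one must follow how the Hausdorff error $\delta$ enters the centre $y_n$ and both radii at once, and confirm that it perturbs only the \emph{fixed} ratio $R/r$ rather than forcing the inner scale to $0$ — this is the reason for imposing $\delta<r/2$ and for passing to the limit in $\delta$ only after a bound valid for all the relevant $n$ is in hand. It is also worth keeping the convention that $N_r$ counts open balls of \emph{diameter} $r$ consistent throughout the fattening argument, though this is purely cosmetic. Finally, in a general complete metric space one replaces Hausdorff convergence of subsets of $\R^d$ by pointed Gromov--Hausdorff convergence of the rescaled spaces and argues in the same way, but only the $\R^d$ statement is needed here.
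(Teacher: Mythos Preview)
The paper does not actually supply a proof of this proposition; it simply refers the reader to \cite[Proposition~6.1.5]{MT}. Your argument is correct and is essentially the standard one: pull the covering bound back through the similarity $T_n$, absorb the Hausdorff error into a $\delta$-perturbation of both scales, and then let $\delta\to0$ once an $n$-independent estimate is in hand. The bookkeeping you flag (keeping $\delta<r/2$, and the fact that $c_n$ cancels from the ratio) is handled cleanly, and the observation that only the fixed ratio $R/r$ is perturbed is exactly why Assouad dimension, unlike box dimension, passes to weak tangents without further hypotheses.
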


\subsection{Assouad dimension of measures and the doubling property}\label{subsec:assouad}
Let us now turn our attention to the dimensions of measures. When referring to (\ref{eq:global_assouad}) we sometimes use the term global Assouad dimension to avoid ambiguity with the pointwise variant. Originally, the global Assouad dimension of a measure was called the upper regularity dimension in \cite{KLV}, but due to the intimate connections between this notion of dimension and the Assouad dimension for sets, the term Assouad dimension of a measure is now widely used. A simple volume argument implies that for a measure $\mu$ fully supported on a metric space $X$, we have the inequality $\dima X\leq \dima\mu$. Moreover, in \cite{VK, LS} it was shown that
\begin{equation*}
    \dima X=\inf\{\dima\mu\colon \mu\textnormal{ is a measure fully supported on }X\},
\end{equation*}
which further supports the current terminology.

The Assouad dimension of a measure has two important properties. First of all, it characterizes the \emph{doubling property}, which the measure $\mu$ is said to satisfy if there is a constant $C\geq 1$, such that for any $x\in X$, $r>0$, we have
\begin{equation}\label{eq:doubling}
    \mu(B(x,2r))\leq C\mu(B(x,r)).
\end{equation}
Measures that satisfy (\ref{eq:doubling}) are called \emph{doubling measures} and it is a simple exercise to show that a measure has finite Assouad dimension if and only if it is doubling \cite[Lemma 4.1.1]{F}.

Secondly, the Assouad dimension is ``the greatest of all dimensions'' \cite{F}, that is
\begin{equation*}
    \dim_{\mathrm{H}}\mu\leq \dim_{\mathrm{P}}\mu\leq\dima\mu,
\end{equation*}
where $\dim_{\mathrm{H}}\mu\coloneqq\essinf_{x\in X}\lowdim_{\mathrm{loc}}(\mu,x)$ and $\dim_{\mathrm{P}}\mu\coloneqq\essinf_{x\in X}\updim_{\mathrm{loc}}(\mu,x)$ are the \emph{Hausdorff dimension} and the \emph{packing dimension} of the measure $\mu$ respectively. As we will see in the next section, the pointwise Assouad dimension enjoys properties which are analogous to the two properties mentioned.

\section{Pointwise Assouad dimension}\label{sec:pw_assouad}
In this section, we discuss the basic properties of the pointwise Assouad dimension of measures. It turns out that we have a correspondence between measures with the \emph{pointwise doubling property} and those with finite pointwise Assouad dimension. A measure $\mu$ is said to be \emph{pointwise doubling at $x\in X$}, if there is a constant $C(x)\geq 1$, such that
\begin{equation*}
    \mu(B(x,2r))\leq C(x)\mu(B(x,r)).
\end{equation*}
We also refer to \cite{BBL} for some discussion on the pointwise doubling property.
The following proposition collects some of the basic properties of the pointwise Assouad dimension. Note that the properties (1) and (2) are the pointwise analogues for the basic properties of the global Assouad dimension discussed in Subsection \ref{subsec:assouad}.
\begin{prop}\label{prop:basic_properties}
Let $\mu$ be a Borel measure fully supported on a metric space $X$. Then for any $x\in X$,
\begin{enumerate}
    \item[(1)] $\dim_{\mathrm{A}}(\mu,x)$ is finite if and only if $\mu$ is pointwise doubling at $x$,
    \item[(2)] $\lowdim_{\textnormal{loc}}(\mu,x)\leq\updim_{\textnormal{loc}}(\mu,x)\leq \dim_{\mathrm{A}}(\mu,x)\leq \dima\mu$,
    \item[(3)] if $\mu$ has an atom at $x$, then $\dim_{\mathrm{A}}(\mu,x)=0$.
\end{enumerate}
\end{prop}
\begin{proof}
    Claim (1). is a trivial modification of \cite[Lemma 4.1.1]{F}. 
    
    For (2), note that the first and last inequalities follow straight from the definitions, so it suffices to prove the middle inequality. Fix $x\in X$, and let $s> \dim_{\mathrm{A}}(\mu,x)$ be arbitrary. Then by definition, there is a constant $C$ depending only on $x$, such that for all $0<r<R$,
    \begin{equation*}
        \frac{\mu(B(x,r))}{\mu(B(x,R))}\geq C\left(\frac{r}{R}\right)^s.
    \end{equation*}
    In particular, by fixing $R$ we see that $\mu(B(x,r))\geq c r^s$,
    where $c=\frac{C\mu(B(x,R))}{R^s}$. Taking logarithms, dividing by $\log r$ and taking $r\to 0$ shows that $\updim_{\textnormal{loc}}(\mu,x)\leq s$. Since $s>\dim_{\mathrm{A}}(\mu,x)$ was arbitrary, this finishes the proof.

    For (3), assume that $\mu$ has an atom at $x\in X$. Let $0<r<R$, and note that
    \begin{equation*}
        \frac{\mu(B(x,R))}{\mu(B(x,r))}\leq \frac{\mu(X)}{\mu(\{x\})}=\frac{\mu(X)}{\mu(\{x\})}\left(\frac{R}{r}\right)^0.
    \end{equation*}
    Since the constant $\frac{\mu(X)}{\mu(\{x\})}$ depends only on $x$, we have $\dim_{\mathrm{A}}(\mu,x)\leq 0$, which is enough to prove the claim.
\end{proof}

\begin{huom}
One can define the \emph{pointwise lower dimension of $\mu$ at $x$} analogously to the lower dimension of a measure as
\begin{align*}
    \dim_{\mathrm{L}}(\mu,x)=\sup\bigg\{s>0\colon &\exists C(x)>0,\textnormal{ s.t. }\forall0<r<R,\\
    &\frac{\mu(B(x,R))}{\mu(B(x,r))}\geq C(x)\left(\frac{R}{r}\right)^s\bigg\}.
\end{align*}
Then Proposition \ref{prop:basic_properties}(2) is strengthened to 
\begin{equation*}
    \dim_{\mathrm{L}}\mu\leq\dim_{\mathrm{L}}(\mu,x)\leq\lowdim_{\textnormal{loc}}(\mu,x)\leq\updim_{\textnormal{loc}}(\mu,x)\leq \dim_{\mathrm{A}}(\mu,x)\leq\dima\mu,
\end{equation*}
for all $x\in X$. We will not, however, pursue the study of the pointwise lower dimension any further in this paper.
\end{huom}

\subsection{Relationships to other dimensions}

Next we investigate the relationships between the pointwise Assouad dimension and other common notions of dimension in fractal geometry. We will focus on the global Assouad dimension, the packing dimension, and the upper Minkowski dimension of measures and the Assouad dimension of the support of the measure.

As Proposition \ref{prop:basic_properties} shows, the global Assouad dimension provides an upper bound for the pointwise Assouad dimension at every point. The natural question that arises is if the converse holds at some point, i.e. is it true that $\sup_{x\in X}\dima(\mu,x)=\dima\mu$. It turns out that generally speaking this is not the case, even in compact spaces. The following is an example of a non-doubling measure, which is pointwise doubling at all points of its support. By Proposition \ref{prop:basic_properties}(1) and the analogous fact for the global Assouad dimension, we see that this measure has finite pointwise Assouad dimension at all points, but infinite global Assouad dimension.

\begin{example}\label{ex:loc_not_glob_doubling}
    Let $x_n=2^{-n}$, and let $\mu=\sum_{n=0}^{\infty}(3^{-n}\delta_{-x_n}+2^{-n}\delta_{x_n})$, where $\delta_x$ denotes the point mass centered at $x$. Clearly the measure is a finite Borel measure fully supported on the set $X=\{0\}\cup\bigcup_{n=0}^{\infty}\{x_n,-x_n\}$. By considering $y_k=-x_k$, and $r_k=2^{-k}$, it follows by a simple calculation that
    \begin{equation*}
        \frac{\mu(B(y_k,2r_k))}{\mu(B(y_k,r_k))}\geq\frac{\sum_{n=k-1}^{\infty}3^{-n}+\sum_{n=k}^{\infty}2^{-n}}{\sum_{n=k-1}^{\infty}3^{-n}}=1+\left(\frac{3}{2}\right)^{k-2} \to \infty,
    \end{equation*}
    as $k\to\infty$, which shows that $\mu$ is not doubling.
    
    The fact that $\mu$ is pointwise doubling at $X\setminus\{0\}$ follows from properties (1) and (3) of Proposition \ref{prop:basic_properties}, and a standard calculation shows that for any  $2^{-k}\leq r < 2^{-k+1}$, we have
    \begin{equation*}
        \frac{\mu(B(0,2r))}{\mu(B(0,r))}\leq \frac{2\sum_{n=k-2}^{\infty}2^{-n}}{\sum_{n=k}^{\infty}2^{-n}}\leq\frac{2^{4-k}}{2^{1-k}}=8,
    \end{equation*}
    which shows that $\mu$ is pointwise doubling at $0$.
\end{example}

The upper Minkowski dimension of a measure was introduced in \cite{KFF} and it is defined by
\begin{align*}
    \updim_{\mathrm{M}}\mu=\inf\{s>0\colon &\textnormal{there exists a constant }c>0\textnormal{ such that}\\
    &\mu(B(x,r))\geq cr^s,  \textnormal{ for all }x\in \text{supp}(\mu)\textnormal{ and }0<r<1\}.
\end{align*}
In \cite[Proposition 4.1]{KFF} it was shown that
\begin{equation}\label{eq:general_relationship}
    \dim_{\mathrm{P}}\mu\leq \updim_{\mathrm{M}}\mu\leq \dim_{\mathrm{A}}\mu.
\end{equation}
By definition of $\dim_{\mathrm{P}}\mu$ and Proposition \ref{prop:basic_properties}(2) we have the general relationship $\dim_{\mathrm{P}}\mu\leq\dim_{\mathrm{A}}(\mu,x)$, for $\mu$-almost every $x$. Recalling (\ref{eq:general_relationship}), it is natural to ask if a similar inequality holds for the upper Minkowski dimension in some direction. This turns out not to be the case. For some self-affine measures on Bedford-McMullen carpets (see Section \ref{sec:ssm} for the definitions), we have $\updim_{\mathrm{M}}\mu \leq\dima(\mu,x)$, for $\mu$-almost all $x$. Example \ref{ex:bm_example} provides an explicit example satisfying this property. However, it is also possible to have $\updim_{\mathrm{M}}\mu >\dima(\mu,x)$ for all $x$, as the following example shows.

\begin{example}
    Let $\mu=\sum_{n=0}^{\infty}(3^{-n}\delta_{-2^{-n}}+2^{-n}\delta_{2^{-n}})$ be the measure of Example \ref{ex:loc_not_glob_doubling}. Let us first show that $\updim_{\mathrm{M}}\mu\geq \frac{\log 3}{\log 2}$. Let $s<\frac{\log 3}{\log 2}$, $x_n=-2^{-n}$, and $r_n=2^{-(n+2)}$. Then
    \begin{equation*}
        \mu(B(x_n,r_n))=3^{-n}=r_n^{\frac{n\log 3}{(n+2)\log 2}}< r_n^{s},
    \end{equation*}
    for all large enough $n$. Since $r_n\to 0$ with $n$, this implies that $\updim_{\mathrm{M}}\mu\geq s$, and taking $s\to\frac{\log 3}{\log 2}$ gives the claim.
    
   Since $\mu$ has an atom at every $x\in X\setminus\{0\}$, by Proposition \ref{prop:basic_properties}(3), $\dim_{\mathrm{A}}(\mu,x)=0<\updim_{\mathrm{M}}\mu$. At the origin, a simple calculation shows that for any $2^{-L}< r \leq 2^{-L+1}$ and $2^{-N-1}\leq R < 2^{-N}$, we have
    \begin{align*}
        \frac{\mu(B(0,R))}{\mu(B(0,r))}\leq \frac{2\sum_{n=N}^{\infty}2^{-n}}{\sum_{n=L}^{\infty}2^{-n}}=\frac{2^{2-N}}{2^{1-L}}\leq 8\left(\frac{R}{r}\right),
    \end{align*}
    and therefore $\dim_{\mathrm{A}}(\mu,0)\leq1<\frac{\log3}{\log 2}\leq \updim_{\mathrm{M}}\mu$.
\end{example}

Finally, a natural question to ask is if the Assouad dimension of the support of a measure is a lower bound for the pointwise Assouad dimension of the measure, as it is for the global one. Our next example shows that this is also not generally the case, in fact, there are measures supported on sets of full Assouad dimension, which have $0$ pointwise Assouad dimension at all points. The example is original, but builds on a construction by Le Donne and Rajala \cite[Example 2.20]{LR}.

\begin{example}
Let $x_{n,k}=2^{-2^n}+k4^{-2^n}$ and let $X=\{0\}\cup\bigcup_{n=1}^{\infty}\bigcup_{k=0}^{n-1}\{x_{n,k}\}$. Define the measure $\mu$ as 
\begin{equation*}
    \mu=\sum_{n=1}^{\infty}\sum_{k=0}^{n-1}\frac{2^{-n}}{n}\delta_{x_{n,k}}.
\end{equation*}
It is straightforward to show that $\mu$ is a finite doubling measure fully supported on $X$. We show that $\dim_{\mathrm{A}}X=1$, and $\dim_{\mathrm{A}}(\mu,x)=0$, for every $x\in X$.

To show that $\dim_{\mathrm{A}}X= 1$, by Proposition \ref{prop:weaktangent} it is enough to show that $[0,1]$ is a weak tangent for the set $X$. For each $n\in\N$ define a similarity $T_n:\R\to \R$ by
\begin{equation*}
    T_n(x)=n^{-1}4^{2^n}(x-2^{-2^n}),
\end{equation*}
and note that
\begin{equation*}
    T_n(X)\cap[0,1]=\bigcup_{k=0}^{n-1}\Big\{\frac{k}{n}\Big\}\to[0,1],
\end{equation*}
in the Hausdorff distance as $n\to\infty$, that is $[0,1]$ is a weak tangent to $X$.

Next we show that $\dim_{\mathrm{A}}(\mu,x)=0$, for every $x\in X$. Note that every point $x\in X\setminus\{0\}$ is an atom so by Proposition \ref{prop:basic_properties}(3), $\dim_{\mathrm{A}}(\mu,x)=0$, so we only need to consider the case $x=0$. Fix $0<r<R<1$, and choose numbers $L,N\in\N$, such that $2^{-2^L}< r \leq 2^{-2^{L-1}}$ and $2^{-2^{N+1}}\leq R < 2^{-2^{N}}$. Clearly $[0,x_{L+1,L}]\subset B(0,r)$ and $B(x,R)\subset [0,x_{N,N-1}]$, so we have
\begin{align*}
    \frac{\mu(B(0,R))}{\mu(B(0,r))}\leq\frac{\mu([0,x_{N,N-1}])}{\mu([0,x_{L+1,L}])}\leq 2^{L-N+1}\leq 2\frac{\log r}{\log R}.
\end{align*}
Note that for any $s>0$, the function $\phi(t)=t^s\log t$ is decreasing for  $0<t<e^{-\frac{1}{s}}$, so for all $0<r<R<e^{-\frac{1}{s}}$ we have
\begin{equation*}
    \frac{\mu(B(0,R))}{\mu(B(0,r))}\leq 2\left(\frac{R}{r}\right)^s.
\end{equation*}
Since this holds for arbitrary $s>0$, we have $\dim_{\mathrm{A}}(\mu,0)=0$.
\end{example}

\section{Quasi-Bernoulli measures on self-conformal sets}\label{sec:qb-meas}
As was observed in Example \ref{ex:loc_not_glob_doubling}, a strict inequality is certainly possible in $\sup_{x}\dima(\mu,x)\leq \dima\mu$, but it turns out that in many natural cases where the measure has some kind of rigid structure, the pointwise Assouad dimension coincides with the global variant almost everywhere. We start by proving the exact dimensionality property (\ref{eq:exact_dimensionality}) for the most general case of this paper in this section, and work our way down to more specific classes of measures in Sections \ref{sec:invmeas} and \ref{sec:ssm}. For the convenience of the readers who are familiar with the definitions, we give the statement of the main result of this section first, and define the necessary concepts after that. The main result we prove at the end of this section is the following.

\begin{thm}\label{thm:quasi-bernoulli-exact-assouad}
    If $\mu$ is a quasi-Bernoulli measure fully supported on a self-conformal set $F$ satisfying the strong separation condition, then
    \begin{equation*}
        \dim_{\mathrm{A}}(\mu,x)=\dim_{\mathrm{A}}\mu<\infty,
    \end{equation*}
    for $\mu$-almost every $x\in F$.
\end{thm}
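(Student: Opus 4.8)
The plan is to establish the two nontrivial inequalities separately: since $\dim_{\mathrm{A}}(\mu,x)\le \dim_{\mathrm{A}}\mu$ holds at every point by Proposition \ref{prop:loc_less_assouad} (and $\dim_{\mathrm{A}}\mu<\infty$ by Proposition \ref{prop:doubling} together with the fact that finite Assouad dimension is equivalent to doubling), it remains to prove that $\dim_{\mathrm{A}}(\mu,x)\ge \dim_{\mathrm{A}}\mu$ for $\mu$-almost every $x$. The idea is to exploit the quasi-Bernoulli self-similarity in the symbolic space: the global Assouad dimension $s\coloneqq\dim_{\mathrm{A}}\mu$ is governed by the worst ratio $\mu(B(y,R))/\mu(B(y,r))$ over all points $y\in F$ and scales $0<r<R$, and by the bounded distortion estimates of Lemma \ref{lemma:BDP} together with the strong separation condition (exactly as in the proof of Proposition \ref{prop:doubling}) such a ratio is comparable, up to uniform multiplicative constants, to a symbolic ratio $\nu([\mathbf{u}])/\nu([\mathbf{u}\mathbf{w}])$ for appropriate words $\mathbf{u},\mathbf{w}\in\Sigma_*$. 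Thus for every $\varepsilon>0$ there exist words $\mathbf{w}_\varepsilon\in\Sigma_*$ and prefixes $\mathbf{u}$ realising a near-extremal ratio: roughly, $\nu([\mathbf{u}])/\nu([\mathbf{u}\mathbf{w}_\varepsilon])\gtrsim (\diam\varphi_{\mathbf{u}}(F)/\diam\varphi_{\mathbf{u}\mathbf{w}_\varepsilon}(F))^{s-\varepsilon}$, and using the quasi-Bernoulli inequality $\nu([\mathbf{u}\mathbf{w}_\varepsilon])\approx\nu([\mathbf{u}])\nu([\mathbf{w}_\varepsilon])$ this can be rephrased purely in terms of the single word $\mathbf{w}_\varepsilon$: the ``local contribution'' of inserting $\mathbf{w}_\varepsilon$ forces a ratio of size at least $(\text{contraction})^{-(s-\varepsilon)}$.

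The key mechanism is then a Borel--Cantelli / normality argument. Fix a sequence $\varepsilon_k\downarrow 0$ and corresponding extremal words $\mathbf{w}_{\varepsilon_k}$. By Lemma \ref{lemma:quasi-normal}, $\nu$-almost every $\mathbf{i}\in\Sigma$ lies in $\mathcal{N}$, i.e.\ contains \emph{every} finite word as a substring; in particular it contains $\mathbf{w}_{\varepsilon_k}$ as a substring for every $k$, and in fact — since we may work with the $\sigma$-invariant ergodic representative from Lemma \ref{lemma:quasi-bernoulli-equivalence} and apply Birkhoff to the indicator of the cylinder $[\mathbf{w}_{\varepsilon_k}]$ — infinitely often along arbitrarily deep positions. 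So for $\mu$-a.e.\ $x=\pi(\mathbf{i})$ and each $k$ there are infinitely many $n$ with $\mathbf{i}|_{n+|\mathbf{w}_{\varepsilon_k}|}=(\mathbf{i}|_n)\mathbf{w}_{\varepsilon_k}$. At such an $n$, choosing $R\approx\diam\varphi_{\mathbf{i}|_n}(F)$ and $r\approx\diam\varphi_{\mathbf{i}|_{n+|\mathbf{w}_{\varepsilon_k}|}}(F)$ and translating back to Euclidean balls via Lemma \ref{lemma:BDP} and the SSC (so that $\varphi_{\mathbf{i}|_{n+|\mathbf{w}_{\varepsilon_k}|}}(F)\subset B(x,r)$ and $B(x,R)\supset \varphi_{\mathbf{i}|_n}(F)$, with $R/r$ controlled), we get $\mu(B(x,R))/\mu(B(x,r))\gtrsim (R/r)^{s-\varepsilon_k}$ for infinitely many scale pairs, where the implied constant is \emph{uniform} (it depends only on the IFS constants $C_1,C_2,C_3,\delta$ and the quasi-Bernoulli constant, not on $x$ or $k$). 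Since no single constant $C(x)$ can absorb these, this forces $\dim_{\mathrm{A}}(\mu,x)\ge s-\varepsilon_k$; letting $k\to\infty$ gives $\dim_{\mathrm{A}}(\mu,x)\ge s$, completing the proof.

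A technical point to handle carefully is the bounded-distortion bookkeeping when passing from a symbolic ratio of the form $\nu([\mathbf{u}])/\nu([\mathbf{u}\mathbf{w}])$ to a ratio between two genuine metric balls: one must ensure, as in Proposition \ref{prop:doubling}, that a ball of a suitable comparable radius around $x=\pi(\mathbf{i})$ is sandwiched between two cylinder sets $\varphi_{\mathbf{i}|_m}(F)$ and $\varphi_{\mathbf{i}|_{m'}}(F)$ whose index lengths differ by a controlled amount, so that the exponent loss is only $O(\varepsilon_k)$ and does not accumulate. The SSC is what makes this sandwiching possible with constants independent of the point.

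The main obstacle I expect is the first step: extracting, for each $\varepsilon$, a \emph{single finite} word $\mathbf{w}_\varepsilon$ whose insertion anywhere along a generic coding produces a near-extremal scale ratio. The definition of $\dim_{\mathrm{A}}\mu$ only guarantees near-extremal \emph{balls} $B(y,R)\supset B(y,r)$ somewhere in $F$; one must convert these into near-extremal \emph{cylinder pairs} and then use the quasi-Bernoulli property to strip off the common prefix, which costs a bounded multiplicative factor but — crucially — a factor that does \emph{not} depend on how deep the prefix is, so the $(s-\varepsilon)$-exponent survives. Making this reduction clean (in particular, checking that the prefix-stripping does not secretly reintroduce $\varepsilon$-sized exponent losses that blow up when the word is inserted at deep positions) is the heart of the argument; everything after it is a standard normality-plus-Borel--Cantelli packaging.
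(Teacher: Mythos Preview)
Your overall strategy---convert an extremal metric ratio to a symbolic one via bounded distortion and SSC, strip the prefix using the quasi-Bernoulli property, then use Lemma \ref{lemma:quasi-normal} to locate the resulting word inside a generic coding and transfer the estimate back---is exactly the route taken in the paper. The bookkeeping you flag in your last two paragraphs is also handled just as you anticipate.

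There is, however, a genuine gap in the step ``Since no single constant $C(x)$ can absorb these, this forces $\dim_{\mathrm{A}}(\mu,x)\ge s-\varepsilon_k$.'' For a \emph{fixed} $k$ the word $\mathbf{w}_{\varepsilon_k}$ is fixed, so every scale pair $(r,R)$ you produce satisfies $R/r\le M_k$ for some constant depending only on $\mathbf{w}_{\varepsilon_k}$. The inequality $\mu(B(x,R))/\mu(B(x,r))\ge c_0(R/r)^{s-\varepsilon_k}$ with $R/r\le M_k$ is then perfectly compatible with $\dim_{\mathrm{A}}(\mu,x)=0$: just take $C(x)=c_0 M_k^{s-\varepsilon_k}$. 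Having the word occur infinitely often along the coding only drives the scales $R,r$ to $0$, not the ratio $R/r$ to $\infty$, and it is the latter (or an unbounded constant in front) that is needed to defeat an arbitrary $C(x)$.

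The paper repairs this by quantifying over the constant as well: for each $s<\dim_{\mathrm{A}}\mu$ and each $C>0$ one picks $y$ and $0<r<R$ with $\mu(B(y,R))/\mu(B(y,r))>C(R/r)^s$, extracts from this a finite word that now depends on $C$, and uses that a point of $\mathcal{N}$ contains \emph{every} finite word---so in particular this $C$-dependent one---to transfer the inequality to $x$ with only a uniform multiplicative loss. Letting $C\to\infty$ then rules out any candidate constant $C(x)$ at exponent $s$, giving $\dim_{\mathrm{A}}(\mu,x)\ge s$. You already invoke the full strength of $\mathcal{N}$ in your proposal; the fix is simply to let the extremal word depend on the target constant $C$ (equivalently, on how badly the exponent is to be violated), not only on $\varepsilon$.
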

\begin{huom}
    In this generality, it is not possible to obtain an explicit formula for the Assouad dimension, however, to complement the result, in Section \ref{sec:invmeas} we provide an example class of measures which satisfy the assumptions of Theorem \ref{thm:quasi-bernoulli-exact-assouad}, and calculate their Assouad dimension explicitly.
\end{huom}

Let us start by recalling some basics of iterated function systems. Let $\Lambda$ be a finite index set, and associate to each $i\in\Lambda$ a contraction map $\varphi_i$ from a compact subset of $\R^d$ to itself. The collection $\{\varphi_i\}_{i\in\Lambda}$ is known as an \emph{iterated function system (IFS)}. By a foundational result of Hutchinson \cite{H}, every IFS has a unique compact and non-empty invariant set $F$ satisfying
\begin{equation*}
    F=\bigcup_{i\in\Lambda}\varphi_i(F),
\end{equation*}
called the \emph{limit set of the IFS}. To make the study easier, one often imposes some restrictions on the defining maps. In this section we will concentrate on the class of \emph{quasi-Bernoulli measures} supported on \emph{self-conformal sets}. In addition to the conformality assumption, which we define later, we require that the IFS satisfies the \emph{strong separation condition (SSC)}, namely we assume that for any distinct $i,j\in\Lambda$, we have $\varphi_i(F)\cap \varphi_j(F)=\emptyset$.

When studying limit sets of iterated function systems, it is often useful to consider a symbolic representation of the IFS. Let $\Sigma=\{(i_1,i_2,\ldots)\colon i_k\in\Lambda\}$ denote the set of infinite sequences of the symbols in $\Lambda$. We call $\Sigma$ the \emph{symbolic space} and members of $\Sigma$ \emph{(infinite) words}. For an integer $n$, let $\Sigma_n=\{(i_1,i_2,\ldots,i_n)\colon i_k\in\Lambda\}$ be the set of \emph{finite words of length $n$} and let $\Sigma_*=\bigcup_{n\in\N}\Sigma_n\cup\{\emptyset\}$ denote the set of all finite words of any length. For any $\mathbf{i}\in\Sigma$, let $\mathbf{i}|_0=\emptyset$ denote the empty word. We use the abbreviation $\mathbf{i}=(i_1,i_2,\ldots)$ for a fixed element of $\Sigma$ and the same notation $\mathbf{i}=(i_1,\ldots,i_n)$ for elements of $\Sigma_n$, but the meaning will be clear from the context. For $\mathbf{i}=(i_1,\ldots,i_n)\in\Sigma_n$, let $\mathbf{i}^-=(i_1,\ldots,i_{n-1})$ denote the finite word obtained by dropping the last element of $\mathbf{i}$. If $\mathbf{i}\in\Sigma$, we write $\mathbf{i}|_n=(i_1,\ldots,i_n)\in\Sigma_n$ for the projection of $\mathbf{i}$ onto the first $n$ coordinates. For $\mathbf{i}\in\Sigma_n$, the cylinder $[\mathbf{i}]\subset \Sigma$ is defined to be the set of all infinite words in $\Sigma$ whose first $n$ letters are the letters of $\mathbf{i}$. In some proofs, we use for $\mathbf{i}\in\Sigma$ and $\mathbf{j}\in\Sigma_*$ the notation $\mathbf{j}\sqsubset\mathbf{i} $, to mean that the word $\mathbf{i}$ contains the word $\mathbf{j}$ as a substring.

For the contractions $\varphi_i$ we abbreviate
\begin{equation*}
    \varphi_{\mathbf{i}|_n}=\varphi_{i_1}\circ\ldots\circ \varphi_{i_n}.
\end{equation*}
Recall that there is a natural correspondence between the symbolic space $\Sigma$ and the limit set $F$ by the coding map $\pi:\Sigma\to F$ defined by
\begin{equation}\label{eq:code_map}
    \{\pi(\mathbf{i})\}=\bigcap_{n=1}^{\infty}\varphi_{\mathbf{i}|_n}(F).
\end{equation}
When $F$ satisfies the SSC, this map is a bijection.

\subsection{Self-conformal sets}\label{subsec:self-conformal}

Next we define self-conformal sets which act as a support for the measures we study in this section. Recall that an IFS $\{\varphi_i\}_{i\in\Lambda}$ on $\R^d$ is called \emph{self-conformal} if it satisfies the following assumptions:

\begin{enumerate}
    \item[(C1)] There is a set $\Omega\subset \R^d$, which is open, bounded and connected, and a compact set $X\subset\Omega$ with non-empty interior, such that
    \begin{equation*}
        \varphi_{i}(X)\subset X,
    \end{equation*}
    for all $i\in\Lambda$.
    \item[(C2)] For each $i\in\Lambda$, the map $\varphi_i$ is a $C^{1+\varepsilon}$-diffeomorphism, and $\varphi_i\colon \Omega\to\Omega$ is conformal. That is, for all $x\in\Omega$, the linear map $\varphi_i'(x)$ is a similarity. In particular, for every $y\in \Omega$, we have
    \begin{equation*}
        |\varphi_i'(x)y|= |\varphi_i'(x)||y|,
    \end{equation*}
    where $|\varphi_i'(x)|$ denotes the operator norm of the linear map $\varphi_i'(x)$.
\end{enumerate}
The use of the bounded open set $\Omega$ here is essential since contractive conformal maps defined on whole $\R^d$ are in fact similarities. The limit set $F$ of an IFS satisfying (C1) and (C2) is called a \emph{self-conformal set}. In the following we let $||\varphi'_{\mathbf{i}}||=\sup_{x\in \Omega}|\varphi'_{\mathbf{i}}(x)|$. It follows from the fact that each $\varphi_i$ is a contraction, that $||\varphi'_{\mathbf{i}}||<1$, for all $\mathbf{i}\in\Sigma_*$, and that for a fixed $\mathbf{i}\in\Sigma$, $||\varphi'_{\mathbf{i}|_n}||$ is strictly decreasing in $n$. Let us recall some key lemmas for the proof of Theorem \ref{thm:quasi-bernoulli-exact-assouad}.

\begin{lemma}\label{lemma:BDP}
There are constants $C>1,\delta>0$, such that the self-conformal set $F$ satisfies the following.

\begin{enumerate}
    \item For all $\mathbf{i}\in\Sigma_*$ and $x,y\in \Omega$, we have $|\varphi_{\mathbf{i}}'(x)|\leq C|\varphi_{\mathbf{i}}'(y)|$.
    
    \item For any $x,y,z\in F$, with $|x-y|\leq \delta$, we have
    \begin{equation*}
        C^{-1}|\varphi_{\mathbf{i}}'(z)|\leq\frac{|\varphi_{\mathbf{i}}(x)-\varphi_{\mathbf{i}}(y)|}{|x-y|}\leq C|\varphi_{\mathbf{i}}'(z)|,
    \end{equation*}
    for all $\mathbf{i}\in\Sigma_*$.

    \item For all $\mathbf{i}\in\Sigma_*$,
    \begin{equation*}
        C^{-1}\norm{\varphi'_{\mathbf{i}}}\leq\diam(\varphi_{\mathbf{i}}(F)) \leq C\norm{\varphi'_{\mathbf{i}}}.
    \end{equation*}
\end{enumerate}
\end{lemma}
The first property in Lemma \ref{lemma:BDP} is commonly called the \emph{Bounded Distortion Property (BDP)} and it originates in \cite{MU}. Property (ii) is a special case of \cite[Lemma 2.3]{Fan} and property (iii) is proved in \cite{MU}.

\begin{lemma}\label{lemma:chain}
For all $\mathbf{i},\mathbf{j}\in\Sigma_*$, we have
\begin{equation*}
    C^{-1}||\varphi'_{\mathbf{i}}||\cdot||\varphi_{\mathbf{j}}'||\leq||\varphi_{\mathbf{i}\mathbf{j}}'||\leq ||\varphi_{\mathbf{i}}'||\cdot||\varphi_{\mathbf{j}}'||,
\end{equation*}
where $C>1$ is the constant of Lemma \ref{lemma:BDP}.
\end{lemma}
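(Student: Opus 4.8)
The plan is to reduce everything to the chain rule together with conformality and the Bounded Distortion Property. Writing $\mathbf{i}\mathbf{j}$ for the concatenation, the chain rule gives, for every $x\in\Omega$,
\begin{equation*}
    \varphi_{\mathbf{i}\mathbf{j}}'(x)=\varphi_{\mathbf{i}}'(\varphi_{\mathbf{j}}(x))\circ\varphi_{\mathbf{j}}'(x).
\end{equation*}
By (C2) both $\varphi_{\mathbf{i}}'(\varphi_{\mathbf{j}}(x))$ and $\varphi_{\mathbf{j}}'(x)$ are similarities, and the operator norm is multiplicative on similarities, so
\begin{equation*}
    |\varphi_{\mathbf{i}\mathbf{j}}'(x)|=|\varphi_{\mathbf{i}}'(\varphi_{\mathbf{j}}(x))|\cdot|\varphi_{\mathbf{j}}'(x)|,
\end{equation*}
and we note that $\varphi_{\mathbf{j}}(x)\in\Omega$ since each $\varphi_i$ maps $\Omega$ into $\Omega$ by (C2), so $|\varphi_{\mathbf{i}}'(\varphi_{\mathbf{j}}(x))|$ is meaningful.

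For the upper bound I would take the supremum over $x\in\Omega$ of the displayed identity and bound the supremum of the product by the product of the suprema: since $\varphi_{\mathbf{j}}(x)$ ranges over a subset of $\Omega$, we get $\sup_x|\varphi_{\mathbf{i}}'(\varphi_{\mathbf{j}}(x))|\leq\|\varphi_{\mathbf{i}}'\|$, and $\sup_x|\varphi_{\mathbf{j}}'(x)|=\|\varphi_{\mathbf{j}}'\|$, yielding $\|\varphi_{\mathbf{i}\mathbf{j}}'\|\leq\|\varphi_{\mathbf{i}}'\|\cdot\|\varphi_{\mathbf{j}}'\|$.

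For the lower bound the point is that $|\varphi_{\mathbf{i}}'(\varphi_{\mathbf{j}}(x))|$ cannot be much smaller than $\|\varphi_{\mathbf{i}}'\|$: applying Lemma \ref{lemma:BDP}(1) with $y=\varphi_{\mathbf{j}}(x)$ and taking the supremum over the first variable gives $\|\varphi_{\mathbf{i}}'\|\leq C_1|\varphi_{\mathbf{i}}'(\varphi_{\mathbf{j}}(x))|$, i.e. $|\varphi_{\mathbf{i}}'(\varphi_{\mathbf{j}}(x))|\geq C_1^{-1}\|\varphi_{\mathbf{i}}'\|$ for every $x\in\Omega$. Since the supremum defining $\|\varphi_{\mathbf{j}}'\|$ need not be attained, I would pick a sequence $x_n\in\Omega$ with $|\varphi_{\mathbf{j}}'(x_n)|\to\|\varphi_{\mathbf{j}}'\|$; then
\begin{equation*}
    \|\varphi_{\mathbf{i}\mathbf{j}}'\|\geq|\varphi_{\mathbf{i}\mathbf{j}}'(x_n)|=|\varphi_{\mathbf{i}}'(\varphi_{\mathbf{j}}(x_n))|\cdot|\varphi_{\mathbf{j}}'(x_n)|\geq C_1^{-1}\|\varphi_{\mathbf{i}}'\|\cdot|\varphi_{\mathbf{j}}'(x_n)|,
\end{equation*}
and letting $n\to\infty$ gives $\|\varphi_{\mathbf{i}\mathbf{j}}'\|\geq C_1^{-1}\|\varphi_{\mathbf{i}}'\|\cdot\|\varphi_{\mathbf{j}}'\|$. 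The only mild technical point (the ``main obstacle'', such as it is) is handling the non-attainment of the suprema via the limiting argument above; everything else is a direct consequence of the chain rule, conformality, and the BDP.
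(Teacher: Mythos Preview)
Your proof is correct and follows essentially the same route as the paper: chain rule plus conformality to factor $|\varphi_{\mathbf{i}\mathbf{j}}'(x)|$, then the BDP for the lower bound and trivial estimates for the upper bound. Your treatment is in fact slightly more careful than the paper's, which simply says ``the result follows by taking suprema'' without spelling out the limiting argument for a possibly non-attained supremum.
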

\begin{proof}
Using the chain rule, and the conformality of the IFS, it is easy to see that for all $x\in F$ we have
\begin{equation*}
    |\varphi_{\mathbf{i}\mathbf{j}}'(x)|= |(\varphi_{\mathbf{i}}\circ\varphi_{\mathbf{j}})'(x)|
    =|\varphi_{\mathbf{i}}'(\varphi_{\mathbf{j}}(x))\cdot\varphi_{\mathbf{j}}'(x)|=|\varphi_{\mathbf{i}}'(\varphi_{\mathbf{j}}(x))|\cdot|\varphi_{\mathbf{j}}'(x)|.
\end{equation*}
Applying Lemma \ref{lemma:BDP} we get that for all $y\in F$
\begin{equation*}
    C^{-1}|\varphi_{\mathbf{i}}'(y)|\cdot|\varphi_{\mathbf{j}}'(x)|\leq |\varphi_{\mathbf{i}\mathbf{j}}'(x)| \leq |\varphi_{\mathbf{i}}'(\varphi_{\mathbf{j}}(x))|\cdot|\varphi_{\mathbf{j}}'(x)|\leq ||\varphi_{\mathbf{i}}'||\cdot||\varphi_{\mathbf{j}}'||.
\end{equation*}
The result follows by taking suprema.
\end{proof}
\begin{huom}
Let $\mathbf{i}\in\Sigma$ be $n$-periodic. Notice that, by applying the previous lemma iteratively, we have
\begin{equation*}
    C^{-k}||\varphi'_{\mathbf{i}|_n}||^k\leq||\varphi'_{\mathbf{i}|_{kn}}||\leq ||\varphi'_{\mathbf{i}|_n}||^k.
\end{equation*}
The exponential growth of the distortion in the lower bound is a problem in Section \ref{sec:invmeas} when we want to establish a lower bound for the Assouad dimension of the measure we investigate. The following lemma provides a precise estimate for this purpose.
\end{huom}

\begin{lemma}\label{lemma:power_equality}
If $x= \pi(\mathbf{i})$, where $\mathbf{i}\in\Sigma$ is $n$-periodic for some $n\in\N$, then for any $k\in\N$ we have
\begin{equation*}
    |\varphi_{\mathbf{i}|_{kn}}'(x)|=|\varphi_{\mathbf{i}|_{n}}'(x)|^k.
\end{equation*}
\end{lemma}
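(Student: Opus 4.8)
Since $\mathbf{i}$ is $n$-periodic, write $\mathbf{i} = \mathbf{j}\mathbf{j}\mathbf{j}\cdots$ where $\mathbf{j} = \mathbf{i}|_n \in \Sigma_n$. The key structural observation is that the point $x = \pi(\mathbf{i})$ is the unique fixed point of the conformal map $\psi \coloneqq \varphi_{\mathbf{j}} = \varphi_{\mathbf{i}|_n}$, since $\psi$ is a contraction of $X$ into itself and $\pi(\mathbf{i}) = \pi(\mathbf{j}\mathbf{i}) = \psi(\pi(\mathbf{i}))$ by the shift-invariance built into the coding map \eqref{eq:code_map}. In particular $\varphi_{\mathbf{j}}(x) = x$, and more generally $\varphi_{\mathbf{i}|_{kn}} = \varphi_{\mathbf{j}}^{\,k} = \psi^k$ also fixes $x$.

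**Main step.** With this in hand, apply the chain rule to $\psi^k = \psi \circ \psi \circ \cdots \circ \psi$ ($k$ factors), evaluated at the fixed point $x$. Because each inner composition $\psi^{m}(x) = x$ for every $m$, every factor of the product is the same linear map $\psi'(x) = \varphi_{\mathbf{i}|_n}'(x)$, so
\begin{equation*}
    (\psi^k)'(x) = \psi'(\psi^{k-1}(x)) \cdot \psi'(\psi^{k-2}(x)) \cdots \psi'(x) = \bigl(\psi'(x)\bigr)^k.
\end{equation*}
By conformality each $\psi'(x)$ is a similarity, hence a scalar multiple of an orthogonal map, and taking operator norms gives $|(\psi^k)'(x)| = |\psi'(x)|^k$, which is precisely $|\varphi_{\mathbf{i}|_{kn}}'(x)| = |\varphi_{\mathbf{i}|_{n}}'(x)|^k$. (One should note that $\varphi_{\mathbf{i}|_{kn}} = \varphi_{\mathbf{i}|_n}^{\,k}$ as maps, which is immediate from $\mathbf{i}$ being $n$-periodic: the first $kn$ symbols of $\mathbf{i}$ are $k$ consecutive copies of $\mathbf{i}|_n$, and $\varphi$ of a concatenation is the composition.)

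**The obstacle.** There is essentially no hard computation here — the content is entirely in the bookkeeping that identifies $x$ as the common fixed point of all the maps $\varphi_{\mathbf{i}|_{kn}}$, which is what collapses the otherwise $k$-fold product of distinct derivatives $\varphi_{\mathbf{i}|_n}'(\varphi_{\mathbf{i}|_n}^{\,k-1}(x)), \varphi_{\mathbf{i}|_n}'(\varphi_{\mathbf{i}|_n}^{\,k-2}(x)), \ldots$ into a single power. The only point requiring a word of care is justifying $\pi(\mathbf{i}) = \varphi_{\mathbf{i}|_n}(\pi(\mathbf{i}))$: this follows because $\varphi_{\mathbf{i}|_n}\bigl(\varphi_{\sigma^n\mathbf{i}|_m}(X)\bigr) = \varphi_{\mathbf{i}|_{n+m}}(X)$ for all $m$ (here $\sigma$ is the shift), and $\sigma^n \mathbf{i} = \mathbf{i}$ by periodicity, so $\varphi_{\mathbf{i}|_n}$ maps the nested intersection defining $\pi(\mathbf{i})$ into itself. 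Everything else is the chain rule and the multiplicativity of operator norms for similarities, as already recorded in the computation inside the proof of Lemma \ref{lemma:chain}.
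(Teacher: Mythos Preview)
Your proof is correct and follows essentially the same approach as the paper: identify $x=\pi(\mathbf{i})$ as the fixed point of $\varphi_{\mathbf{i}|_n}$, then apply the chain rule to $\varphi_{\mathbf{i}|_{kn}}=(\varphi_{\mathbf{i}|_n})^k$ at this fixed point and use conformality to pass to operator norms. If anything, you have spelled out the fixed-point justification and the role of conformality more carefully than the paper does.
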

\begin{proof}
Let $\mathbf{i}\in \Sigma$ be $n$-periodic, and let $x=\pi(\mathbf{i})$. By the definition of $\pi$, this implies that 
\begin{equation}\label{eq:x_invariance}
    \varphi_{\mathbf{i}|_n}(x)=x.
\end{equation}
Let $k\in\N$. Using the chain rule, (\ref{eq:x_invariance}) and the conformality of the IFS we find that
\begin{align*}
    |\varphi_{\mathbf{i}|_{kn}}'(x)|&=|((\varphi_{i_1}\circ\ldots\circ\varphi_{i_n})\circ\underset{k\text{ times}}{\ldots}\circ(\varphi_{i_1}\circ\ldots\circ\varphi_{i_n}))'(x)|\\
    &=|(\varphi_{i_1}\circ\ldots\circ \varphi_{i_n})'(x)\cdot\underset{k\text{ times}}{\ldots}\cdot(\varphi_{i_1}\circ\ldots\circ \varphi_{i_n})'(x)|\\
    &=|\varphi_{\mathbf{i}|_n}'(x)^k|=|\varphi_{\mathbf{i}|_n}'(x)|^k.
\end{align*}
\end{proof}

\subsection{Quasi-Bernoulli measures}
A probability measure $\nu$ on $\Sigma$ is called \emph{quasi-Bernoulli} if there exists a uniform constant $C\geq 1$, such that for all $\mathbf{i},\mathbf{j}\in\Sigma_*$, we have
\begin{equation*}
    C^{-1}\nu([\mathbf{i}])\nu([\mathbf{j}])\leq \nu([\mathbf{i}\mathbf{j}])\leq C\nu([\mathbf{i}])\nu([\mathbf{j}]),
\end{equation*}
where here and hereafter $\mathbf{i}\mathbf{j}$ denotes the concatenation of the finite words $\mathbf{i}$ and $\mathbf{j}$. Note the similarity to Lemma \ref{lemma:chain}. If $C$ can be taken to equal $1$, then the measure is called a \emph{Bernoulli measure}.

To simplify notation, from hereafter we write $A\lesssim B$ to mean that $A$ is bounded from above by $B$ multiplied by a uniform constant. Similarly, we say that $A\gtrsim B$, if $B\lesssim A$ and $A\approx B$ if $B\lesssim A\lesssim B$.

Recall that two measures $\mu$ and $\nu$ are said to be \emph{equivalent} if $\mu(A)=0$ if and only if $\nu(A)=0$. In the following $\sigma\colon \Sigma\to\Sigma$ denotes the \emph{left-shift} defined by $\sigma(\iii)=i_2i_2\ldots$. Also recall that a measure $\nu$ on $\Sigma$ is \emph{ergodic} if either $\nu(A)=0$ or $\nu(A)=1$, for all $\sigma$-invariant sets $A\subset \Sigma$.

For the rest of this section, let $\NN\subset \Sigma$ denote the set of infinite words, which contain all finite words as a substring, that is
\begin{equation*}
    \NN = \{\iii\in\Sigma\colon \jjj\sqsubset\iii,\text{ for all }\jjj\in\Sigma_*\}.
\end{equation*}
The following lemma is simple, but crucial to the proof of Theorem \ref{thm:quasi-bernoulli-exact-assouad}.
\begin{lemma}\label{lemma:quasi-normal}
If $\nu$ is a quasi-Bernoulli measure, then $\nu(\Sigma\setminus\NN)=0$.
\end{lemma}
\begin{proof}
It is well known that if $\nu$ is a quasi-Bernoulli measure, then the measure $\tilde{\nu}$ obtained as a weak-$*$ accumulation point of the sequence
\begin{equation*}
    \tilde{\nu}_n\coloneqq\frac{1}{n}\sum_{j=0}^{n-1}\nu\circ\sigma^{-j},
\end{equation*}
is a $\sigma$-invariant and ergodic quasi-Bernoulli measure, which is equivalent with $\nu$ \cite{Heur}. Therefore, we may assume without loss of generality that $\nu$ is $\sigma$-invariant and ergodic. Now for every $\mathbf{j}\in\Sigma_*$, we see by applying Birkhoff's ergodic theorem, that
\begin{align*}
    \lim_{n\to\infty}\frac{1}{n}\sum_{i=0}^{n}\chi_{[\mathbf{j}]}(\sigma^n\mathbf{i})=\int_{\Sigma}\chi_{[\mathbf{j}]}d\nu = \nu([\mathbf{j}])>0,
\end{align*}
for $\nu$-almost every $\mathbf{i}\in\Sigma$, where $\chi_{[\mathbf{j}]}$ denotes the indicator function of the set $[\mathbf{j}]$. In particular, this implies that for almost every $\mathbf{i}$, there is $n\in\N$, such that $\chi_{[\mathbf{j}]}(\sigma^n\mathbf{i})=1$, that is $\mathbf{j}\sqsubset\mathbf{i}$. Let $\Sigma_{\mathbf{j}}=\{\mathbf{i}\in\Sigma\colon \mathbf{j}\sqsubset\mathbf{i}\}$, so by the previous $\nu(\Sigma\setminus\Sigma_{\mathbf{j}})=0$. By definition of $\mathcal{N}$, we have
\begin{equation*}
    \nu(\Sigma\setminus\mathcal{N})=\nu\left(\Sigma\setminus\bigcap_{\mathbf{j}\in\Sigma_*}\Sigma_{\mathbf{j}}\right)=\nu\left(\bigcup_{\mathbf{j}\in\Sigma_*}\Sigma\setminus\Sigma_{\mathbf{j}}\right)\leq \sum_{\mathbf{j}\in\Sigma_*}\nu(\Sigma\setminus\Sigma_{\mathbf{j}})=0.
\end{equation*}
\end{proof}
We say that a measure $\mu$ supported on a self-conformal set $F$ is quasi-Bernoulli if it is the projection of a quasi-Bernoulli measure $\nu$ supported on $\Sigma$ under the natural projection $\pi\colon\Sigma\to F$ defined as in (\ref{eq:code_map}). Next we show that the quasi-Bernoulli measures supported on self-conformal sets satisfying the SSC are doubling, which in particular implies that the Assouad dimensions of these measures are finite. After that, we prove the main theorem of this section, Theorem \ref{thm:quasi-bernoulli-exact-assouad}.
\begin{prop}\label{prop:doubling}
If $\mu$ is a quasi-Bernoulli measure fully supported on a self-conformal set $F$ satisfying the strong separation condition, then $\mu$ is doubling.
\end{prop}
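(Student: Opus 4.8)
The plan is to show that there exist constants $C\geq 1$ and $r_0>0$ such that $\mu(B(x,2r))\leq C\mu(B(x,r))$ for all $x\in F$ and $0<r<r_0$, and then invoke Lemma \ref{lemma:doubling_small_r}. Here $\mu=\pi_*\nu$ for a quasi-Bernoulli measure $\nu$ on $\Sigma$, and we work with the constants $C_1,C_2,C_3,\delta$ from Lemma \ref{lemma:BDP} and the quasi-Bernoulli constant of $\nu$. The key point is that under the SSC, balls in $F$ are comparable to cylinder sets $\varphi_{\mathbf{i}}(F)$, so the doubling property for $\mu$ reduces to a comparison of $\nu$-measures of nearby cylinders, which the quasi-Bernoulli property controls.

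First I would record the standard "stopping" construction: for $x=\pi(\mathbf{i})$ and small $r>0$, let $\mathbf{i}(r)=\mathbf{i}|_n$ be the shortest prefix with $\norm{\varphi'_{\mathbf{i}|_n}}\leq r$ (equivalently $\diam(\varphi_{\mathbf{i}|_n}(F))\lesssim r$ by Lemma \ref{lemma:BDP}(3)); since contraction ratios are bounded below by some $\rho_{\min}>0$, we have $\norm{\varphi'_{\mathbf{i}(r)}}\approx r$ up to a factor depending only on $\rho_{\min}$ and $C_1,C_3$. Using the SSC, the sets $\varphi_{\mathbf{j}}(F)$, $\mathbf{j}\in\Sigma_n$, are pairwise disjoint compact sets; a separation/volume argument (the distinct $n$-cylinders are $\gtrsim \norm{\varphi'_{\mathbf{i}(r)}}$-separated, again by bounded distortion applied to the base separation of the first-level pieces) shows that $B(x,r)\cap F$ meets only a bounded number $M$ of cylinders $\varphi_{\mathbf{j}}(F)$ with $\mathbf{j}\in\Sigma_n$ and $\norm{\varphi'_{\mathbf{j}}}\approx r$, and conversely $\varphi_{\mathbf{i}(c r)}(F)\subset B(x,r)$ for a suitable small constant $c$. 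Hence
\begin{equation*}
    \nu([\mathbf{i}(cr)])\;\leq\;\mu(B(x,r))\;\leq\;\sum_{\mathbf{j}}\nu([\mathbf{j}])\;\leq\;M\max_{\mathbf{j}}\nu([\mathbf{j}]),
\end{equation*}
the sum over the $\leq M$ relevant cylinders $\mathbf{j}$.

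Next I would pass from $r$ to $2r$: the stopping words $\mathbf{i}(2r)$ and $\mathbf{i}(r)$ differ by a bounded number of symbols, i.e. $\mathbf{i}(r)=\mathbf{i}(2r)\mathbf{k}$ with $|\mathbf{k}|\leq L$ for some $L=L(\rho_{\min})$, because each extra symbol shrinks $\norm{\varphi'}$ by at most a factor $\rho_{\min}$ and at least a factor comparable (via $C_1$) to $\rho_{\max}<1$. By the quasi-Bernoulli inequality, $\nu([\mathbf{i}(2r)\mathbf{k}])\geq C^{-1}\nu([\mathbf{i}(2r)])\nu([\mathbf{k}])\geq C^{-1}\big(\min_{i\in\Lambda}\nu([i])\big)^{L}\nu([\mathbf{i}(2r)])$, and similarly all the cylinders $\mathbf{j}$ appearing in the upper bound for $\mu(B(x,2r))$ are prefixes-extended-by-$O(1)$-symbols of a single comparable word, so their $\nu$-measures are each $\lesssim \nu([\mathbf{i}(2r)])\lesssim \nu([\mathbf{i}(cr)])$ again by the quasi-Bernoulli property (both bounds). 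Chaining these comparisons gives $\mu(B(x,2r))\lesssim \nu([\mathbf{i}(cr)])\lesssim \mu(B(x,r))$ with a constant depending only on $\Lambda$, $\rho_{\min}$, $C_1,C_2,C_3$, the quasi-Bernoulli constant, and the SSC separation constant — in particular independent of $x$ and $r$. Lemma \ref{lemma:doubling_small_r} then upgrades this to the full doubling property.

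The main obstacle is the geometric bookkeeping in the second paragraph: making precise, with the SSC and bounded distortion, that a ball $B(x,r)$ intersects only boundedly many cylinders of size $\approx r$ and contains one of size $\approx r$, with all constants uniform in $x$. This is where conformality (so that images of balls are essentially balls, up to $C_2$) and the SSC (a fixed positive gap between the first-level pieces, inflated uniformly by bounded distortion) are both essential; once the "ball $\leftrightarrow$ bounded union of comparable cylinders" dictionary is set up, the measure estimate is a routine application of the quasi-Bernoulli inequalities.
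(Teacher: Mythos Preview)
Your proposal is correct and follows essentially the same strategy as the paper: reduce to small $r$ via Lemma~\ref{lemma:doubling_small_r}, pick the stopping cylinder $\mathbf{i}|_n$ with $\norm{\varphi'_{\mathbf{i}|_n}}\approx r$, sandwich $B(x,r)$ and $B(x,2r)$ between cylinders that differ only by a bounded number of symbols, and conclude via the quasi-Bernoulli inequality.

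The one noteworthy difference is in how the upper containment is handled. You set up a Moran-cover style argument, bounding the number $M$ of stopping cylinders meeting $B(x,2r)$ and then arguing that each such $\mathbf{j}$ shares a common ancestor with $\mathbf{i}(2r)$ within $O(1)$ levels. The paper instead exploits the SSC more directly: it fixes once and for all an integer $k$ with $\max_{\mathbf{i}\in\Sigma_k}\norm{\varphi'_{\mathbf{i}}}<\tfrac{\delta}{2C_3^2}$ and shows outright that $B(x,2r)\cap F\subset \varphi_{\mathbf{i}|_{n-k-1}}(F)$, i.e.\ a \emph{single} ancestor cylinder, so your $M$ is in fact $1$ and the ``common ancestor'' step becomes trivial. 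Your more elaborate bookkeeping is the sort of thing one would need under the OSC, but under the SSC it is unnecessary; the paper's version is shorter and avoids the slightly delicate claim that the $\nu$-measures of the various $\mathbf{j}$ are all comparable to $\nu([\mathbf{i}(2r)])$.
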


\begin{proof}
Let $\delta = \min_{i\ne j}d(\varphi_i(F),\varphi_j(F))$, which is positive since $F$ is strongly separated. Fix an integer $k$ satisfying $\max_{\mathbf{i}\in\Sigma_k}\norm{\varphi_{\mathbf{i}}'}< \frac{\delta}{2C^4}$, where $C$ is the maximum of the constant given by Lemma \ref{lemma:BDP}. Finally, let $c=\min_{\mathbf{i}\in\Sigma_{k+1}}\nu([\mathbf{i}])$.

Since $F$ is compact and $\mu$ is fully supported on $F$, it is easy to see that it suffices to consider only uniformly small values of $r>0$.
Therefore let 
\begin{equation*}
0<r<\min\{||\varphi_{\mathbf{i}|_k}'||\colon \mathbf{i}\in\Sigma_k\}.
\end{equation*}
Note that the right hand side is positive since $X$ is compact so this is possible. Also fix $x\in F$, let $\mathbf{i}\in\Sigma$ be such that $\pi(\mathbf{i})=x$, and choose $n\in\N$ as the unique integer satisfying $C||\varphi_{\mathbf{i}|_n}'||< r \leq C||\varphi_{\mathbf{i}|_{n-1}}'||$. This immediately implies that $\varphi_{\mathbf{i}|_n}(F)\subset B(x,r)$. Note that by the assumption on $r$ we have
\begin{equation*}
    ||\varphi_{\mathbf{i}|_n}'|| <||\varphi_{\mathbf{i}|_k}'||.
\end{equation*}
so in particular $n>k$. For any $l\in\N$ and $i\in\Lambda$ let $\mathbf{i}|_li$ denote the word $(i_1,i_2,\ldots, i_l,i)$ and notice that by the strong separation condition and Lemma \ref{lemma:BDP}(1)-(3), we have for all large enough $l\in\N$ and $i\ne j$ that
\begin{equation}\label{eq:ssc}
    d(\varphi_{\mathbf{i}_l i}(F),\varphi_{\mathbf{i}_l j}(F))\coloneqq\inf_{\substack{x\in \varphi_{\mathbf{i}_l i}(F) \\y\in \varphi_{\mathbf{i}_l j}(F)}}|x-y|\geq \frac{\delta}{C^2}\cdot \diam(\varphi_{\mathbf{i}|_l}(F))\geq \frac{\delta}{C^3}||\varphi_{\mathbf{i}|_l}'||.
\end{equation}
Now using Lemma \ref{lemma:chain}, we have for every $y\in B(x,2r)$
\begin{align*}
    d(y,\varphi_{\mathbf{i}|_{n-k-1}}(F))&\leq 2r< \frac{\delta}{C^4\max_{\mathbf{i}\in\Sigma_k}||\varphi_{\mathbf{i}}'||}r\leq \frac{\delta}{C^3\max_{\mathbf{i}\in\Sigma_k}\norm{\varphi_{\mathbf{i}}'}}||\varphi_{\mathbf{i}|_{n-1}}'||\\
    &\leq\frac{\delta}{C^3}\frac{||\varphi_{\mathbf{i}|_{n-1}}'||}{||\varphi_{\sigma^{n-k-1}\mathbf{i}|_{k}}'||}\leq \frac{\delta}{C^3} ||\varphi_{\mathbf{i}|_{n-k-1}}'||,
\end{align*}
in particular, combining this with estimate (\ref{eq:ssc}), we have $B(x,2r)\cap F\subset \varphi_{\mathbf{i}|_{n-k-1}}(F)$. Therefore, using the quasi-Bernoulli property, we have
\begin{align*}
    \frac{\mu(B(x,2r))}{\mu(B(x,r))}&\leq \frac{\mu(\varphi_{\mathbf{i}|_{n-k-1}}(F))}{\mu(\varphi_{\mathbf{i}|_{n}}(F)))}=\frac{\nu([\mathbf{i}|_{n-k-1}])}{\nu([\mathbf{i}|_{n}])}\lesssim \frac{1}{\nu([\sigma^{n-k-1}\mathbf{i}|_{k+1}])}\leq \frac{1}{c}.
\end{align*}
Since the upper bound is independent of $x$ and $r$, the claim follows.
\end{proof}

\begin{proof}[Proof of Theorem \ref{thm:quasi-bernoulli-exact-assouad}]
It follows from Proposition \ref{prop:doubling} that $\dim_{\mathrm{A}}\mu$ is finite. Let $s<\dim_{\mathrm{A}}\mu$ and $c>0$.
Now there is a point $y\in F$ and radii $0<r<R$, satisfying
\begin{equation*}
    \frac{\mu(B(y,R))}{\mu(B(y,r))}> c\left(\frac{R}{r}\right)^s.
\end{equation*}
Let $\mathbf{i}\in \mathcal{N}$, $x=\pi(\mathbf{i})$ and let $\mathbf{j}\in\Sigma$, such that $\pi(\mathbf{j})=y$.  Now choose $k,n\in\N$ as the unique integers which satisfy
\begin{equation*}
    ||\varphi'_{\mathbf{j}|_{n+1}}||\leq R < ||\varphi'_{\mathbf{j}|_{n}}|| \text{, and }\quad ||\varphi'_{\mathbf{j}|_{k}}||< r \leq ||\varphi'_{\mathbf{j}|_{k-1}}||.
\end{equation*} 
Then $\varphi_{\mathbf{j}|_k}(F)\subset B(y,Cr)$ and $B(y,\frac{\delta}{2C}R)\cap F\subset \varphi_{\mathbf{j}|_n}(F)$, where $C$ is the constant of Lemma \ref{lemma:BDP}. Using the quasi-Bernoulli property and the fact that $\mu$ is doubling, we get that
\begin{equation*}
    c\left(\frac{R}{r}\right)^s<\frac{\mu(B(y,R))}{\mu(B(y,r))}\lesssim \frac{\mu(\varphi_{\mathbf{j}|_n}(F))}{\mu(\varphi_{\mathbf{j}|_k}(F))}=\frac{\nu([\mathbf{j}|_n])}{\nu([\mathbf{j}|_k])}\lesssim \nu([\sigma^{n}\mathbf{j}|_{k-n}])^{-1}.
\end{equation*}
Now since $\mathbf{i}\in \mathcal{N}$, there is an index $l\in\N$, such that $\sigma^l\mathbf{i}|_{k-n}=\sigma^{n}\mathbf{j}|_{k-n}$. Let $R'=||\varphi'_{\mathbf{i}|_{l}}||$ and $r'= ||\varphi'_{\mathbf{i}|_{l+n}}||$, and observe that by Lemma \ref{lemma:chain},
\begin{equation*}
    \frac{R'}{r'}=\frac{||\varphi'_{\mathbf{i}|_{l}}||}{||\varphi'_{\mathbf{i}|_{l+n}}||}\lesssim \frac{1}{||\varphi'_{\sigma^l\mathbf{i}|_{n}}||}=\frac{1}{||\varphi'_{\sigma^{n}\mathbf{j}|_{k-n}}||}\lesssim\frac{ ||\varphi'_{\mathbf{j}|_{n}}||}{||\varphi'_{\mathbf{j}|_{k}}||}=\frac{R}{r}.
\end{equation*}
Again, it is easy to see that $\varphi_{\mathbf{i}|_{l}}(F)\subset B(x,CR')$ and $B(x,\frac{\delta}{2C}r')\cap F\subset \varphi_{\mathbf{i}|_{l+n}}(F)$, so using the doubling and quasi-Bernoulli properties of $\mu$, we see that
\begin{align*}
    \frac{\mu(B(x,R'))}{\mu(B(x,r'))}&\gtrsim\frac{\mu(\varphi_{\mathbf{i}|_{l}}(F))}{\mu(\varphi_{\mathbf{i}|_{l+k-n}}(F))}=\frac{\nu([\mathbf{i}|_{l}])}{\nu([\mathbf{i}|_{l+k-n}])}\gtrsim \nu([\sigma^l\mathbf{i}|_{k-n}])^{-1}\\
    &=\nu([\sigma^{n}\mathbf{j}|_{k-n}])^{-1} \gtrsim c\left(\frac{R}{r}\right)^s\gtrsim c\left(\frac{R'}{r'}\right)^s.
\end{align*}
This shows that $\dim_{\mathrm{A}}(\mu,x)\geq s$, and taking $s\to\dim_{\mathrm{A}}\mu$ gives $\dim_{\mathrm{A}}(\mu,x)\geq \dim_{\mathrm{A}}\mu$. Since this holds for all $\mathbf{i}\in\mathcal{N}$, the claim follows from Lemma \ref{lemma:quasi-normal}.
\end{proof}

\section{Measures with place dependent probabilities}\label{sec:invmeas}
In this section, we study the class of \emph{invariant measures with place dependent probabilities} supported on strongly separated self-conformal sets. The results of Section \ref{sec:qb-meas} show that these measures are doubling and that their pointwise Assouad dimension coincides with the global Assouad dimension at almost every point. Our main result of this section, Theorem \ref{thm:pw_ssc_formula}, complements these results by giving an explicit formula for their Assouad dimension. To our knowledge, the formula has not been previously established in the literature. Let us begin by defining our setting.

\subsection{Place dependent invariant measures}
We assume that our IFS $\{\varphi_i\}_{i\in\Lambda}$ is self-conformal, that is it satisfies (C1) and (C2). In contrast to the case of self-conformal measures where we assign a uniform measure $p_i$ on the set $\varphi_i(F)$, now we allow the mass concentration to depend continuously on the point, that is we choose for each $i\in\Lambda$ a Hölder continuous function $p_i\colon X\to (0,1)$, which satisfy $\sum_{i\in\Lambda}p_i(x)\equiv 1$ and consider the probability  measures satisfying the equation
\begin{equation*}
    \int f(x)d\mu(x)=\sum_{i\in\Lambda}\int p_i(x)f\circ \varphi_i(x)d\mu(x),
\end{equation*}
for $f\in C(X)$ where here and hereafter $C(X)$ denotes the set of continuous real valued functions on $X$. We define the \emph{Ruelle operator} $T\colon C(X)\to C(X)$ by
\begin{equation*}
    (Tf)(x)=\sum_{i\in\Lambda}p_i(x)f\circ \varphi_i(x),
\end{equation*}
and let $T^*\colon M(X)\to M(X)$ denote the adjoint operator, where $M(X)$ is the set of Borel probability measures on $X$. Recall that for $\nu\in M(X)$, $T^*\nu$ is given by
\begin{equation*}
    T^*\nu(B)=\sum_{i\in\Lambda}\int_{\varphi_i^{-1}(B)}p_i(x)d\nu(x),
\end{equation*}
for all Borel subsets $B\subset X$. Barnsley et al. \cite{Barn} as well as Fan and Lau \cite{Fan} have studied the measures which are invariant under $T$ in a setting which is more general than ours. The next proposition, which is vital to this section, is a special case of \cite[Theorem 1.1]{Fan} or \cite[Theorem 2.1]{Barn} and we refer to the mentioned papers for the proof.

\begin{prop}\label{prop:invmeas}
Let $F$ be a self-conformal set satisfying the SSC and $p_i\colon X\to (0,1)$ be Hölder continuous for every $i\in\Lambda$. Then there is a unique Borel probability measure $\mu$ satisfying
\begin{equation*}
    T^*\mu=\mu.
\end{equation*}
Furthermore, for every $f\in C(X)$, $T^nf$ converges uniformly to the constant $\int f(x)d \mu(x)$.
\end{prop}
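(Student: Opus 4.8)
The plan is to read the statement as an \emph{asymptotic stability} result for the Ruelle operator $T$ (equivalently for the place-dependent random iteration Markov chain), and to prove it by a coupling argument together with a soft fixed-point argument. First I would record the elementary uniform bounds that drive everything: since $X$ is compact and each $p_i$ is continuous, strictly positive, with $\sum_{i\in\Lambda}p_i\equiv 1$, there is $p_{\min}>0$ with $p_i(x)\ge p_{\min}$ for all $i\in\Lambda$, $x\in X$; Hölder continuity gives constants $L>0$, $\alpha\in(0,1]$ with $\sum_{i\in\Lambda}|p_i(z)-p_i(z')|\le L\,|z-z'|^{\alpha}$; and by (C1)--(C2) (after passing to a fixed iterate of the IFS if necessary, using that the $\varphi_i$ are contractions on the compact set $X$ and Lemma~\ref{lemma:BDP}(3)) there is $\rho\in(0,1)$ with $\mathrm{Lip}(\varphi_i)\le\rho$ for every $i$. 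Existence of a fixed point $\mu=T^*\mu$ is then immediate: $T^*$ maps the weak-$*$ compact convex set $M(X)$ continuously into itself, so the Schauder--Tychonoff theorem applies; alternatively one extracts a weak-$*$ limit of the Cesàro averages $\tfrac1n\sum_{k=0}^{n-1}(T^*)^k\delta_{x_0}$.

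The heart of the proof is to show that $\mathrm{osc}(T^nf):=\max_X T^nf-\min_X T^nf\to 0$ for every $f\in C(X)$. Writing, for a word $\mathbf{i}$ of length $n$, $\varphi_{\mathbf{i}}$ for the composition and $p_{\mathbf{i}}(x)$ for the associated cocycle of probabilities, one has $(T^nf)(x)=\sum_{|\mathbf{i}|=n}p_{\mathbf{i}}(x)f(\varphi_{\mathbf{i}}(x))=\mathbb{E}_x[f(Z_n)]$, where $(Z_k)$ is the Markov chain that jumps from $z$ to $\varphi_i(z)$ with probability $p_i(z)$. Fix $x,y\in X$ and run two copies of this chain from $x$ and $y$, coupled at each step by choosing the \emph{same} symbol with the maximal probability $\sum_{i}\min(p_i(z),p_i(z'))$ allowed by the two current laws. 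Whenever the chosen symbols agree the distance between the two copies is multiplied by at most $\rho$, while the conditional probability of a disagreement at a step is at most $\tfrac12 L\,(\text{current distance})^{\alpha}$. Since $\mathrm{diam}(X)<\infty$, a block/renewal estimate — group the steps, use that a long run of agreements forces the distance below any prescribed threshold, while disagreements become increasingly unlikely as the distance shrinks — yields $\mathbb{E}[d(Z_n,Z_n')]\le\eta_n$ with $\eta_n\to 0$ and $\eta_n$ independent of $x,y$. Hence $|(T^nf)(x)-(T^nf)(y)|\le\mathbb{E}[\omega_f(d(Z_n,Z_n'))]\le\omega_f(\delta)+2\|f\|_\infty\,\mathbb{P}(d(Z_n,Z_n')>\delta)$ for every $\delta>0$, where $\omega_f$ is the modulus of continuity of $f$; letting $n\to\infty$ and then $\delta\to 0$ gives $\mathrm{osc}(T^nf)\to 0$.

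From here the conclusions are routine. Using $T^*\mu=\mu$ we get $\int T^nf\,d\mu=\int f\,d\mu$ for every $n$, so $T^nf$ lies within $\mathrm{osc}(T^nf)$ of the constant $\int f\,d\mu$, whence $\|T^nf-\int f\,d\mu\|_\infty\le\mathrm{osc}(T^nf)\to 0$, which is exactly the claimed uniform convergence. Uniqueness is then automatic: if $\mu_1,\mu_2$ are both $T^*$-invariant, then $\int f\,d\mu_j=\int T^nf\,d\mu_j\to\int f\,d\mu$ for all $f\in C(X)$, so $\mu_1=\mu_2=\mu$. The main obstacle is the coupling estimate in the middle step: because the two chains started at $x$ and $y$ carry \emph{different} transition probabilities, the symbol choices cannot be perfectly synchronised, and one must trade the Hölder modulus of the $p_i$ against the geometric contraction of the $\varphi_i$ — the delicate point being that the disagreement probability is itself controlled by the (shrinking) distance, so the bound must be bootstrapped rather than applied in one shot (this is precisely the ``contractive on average'' mechanism of Barnsley et al.\ \cite{Barn} and Fan--Lau \cite{Fan}). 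An alternative, perhaps cleaner to write, replaces the probabilistic argument by a Dobrushin/Hilbert-metric contraction estimate directly for $T$: for $n$ large the $n$-step kernel is uniformly mixing — here the SSC guarantees well-defined inverse branches and disjointness of the pieces $\varphi_{\mathbf{i}}(F)$, and $p_{\min}^n$ supplies the uniform lower bound — so that $T^n$ contracts oscillations by a fixed factor, and one iterates.
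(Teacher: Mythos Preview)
The paper does not supply its own proof of this proposition: it states explicitly that the result is a special case of \cite[Theorem~1.1]{Fan} and \cite[Theorem~2.1]{Barn} and refers to those papers for the argument. So there is no ``paper's proof'' to compare against; what you have written is effectively a reconstruction of the proof in the cited sources.

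As such a reconstruction your plan is sound and recognisably the Barnsley--Demko--Elton--Geronimo route: existence by Schauder--Tychonoff (or Ces\`aro averaging), then decay of $\mathrm{osc}(T^nf)$ via a maximal coupling of the two place-dependent chains, then uniqueness and the identification of the limit as $\int f\,d\mu$ from $T^*$-invariance. You correctly isolate the only nontrivial step --- controlling the coupling when the two chains carry \emph{different} transition laws --- and correctly note that the H\"older modulus of the $p_i$ must be played off against the geometric contraction of the $\varphi_i$; this is exactly the ``Dini condition'' mechanism in \cite{Barn}. The one place where the sketch is thin is the ``block/renewal estimate'' yielding $\mathbb{E}[d(Z_n,Z_n')]\to 0$: a single-step recursion $\mathbb{E}[D_{n+1}\mid D_n]\le \rho D_n + C D_n^{\alpha}$ is \emph{not} by itself contractive for small $D_n$ when $\alpha<1$, so you really do need the renewal/bootstrapping structure (or, as you suggest at the end, bypass it entirely via a Ruelle--Perron--Frobenius/Hilbert-metric argument as in \cite{Fan}, which is arguably cleaner to write down). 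None of this is a gap in the plan --- you flag it yourself --- but be aware that this step carries essentially all the work.
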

We call the measure $\mu$ an \emph{invariant measure with place dependent probabilities}, which we shorten to just \emph{invariant measure} for the remainder of this section. As was the case with self-similar measures and Bernoulli measures on the corresponding code space, there is also a natural correspondence between the invariant measure $\mu$ and a Gibbs measure on the code space. Let us define some useful notation for this section. For $\mathbf{i}\in\Sigma$ we slightly abuse notation by writing $p_i(\mathbf{i})\coloneqq p_i(\pi(\mathbf{i}))$ and $\varphi_i(\mathbf{i})\coloneqq\varphi_i(\pi(\mathbf{i}))$, where $\pi\colon \Sigma\to F$ is the natural projection given by (\ref{eq:code_map}). For $\mathbf{i}\in\Sigma$ and $n\in\N$ we let
\begin{equation*}
    p_{\mathbf{i}|_n}(\mathbf{i}) =\prod_{k=1}^{n}p_{i_k}(\sigma^{k-1}\mathbf{i}).
\end{equation*}
Denote by $P(\Sigma)\subset\Sigma$ the set of periodic points of $\Sigma$. For $\mathbf{i}\in P(\Sigma)$ with period of length $n$, we let
\begin{equation*}
    \overline{p}_{\mathbf{i}}=p_{\mathbf{i}|_{n}}(\mathbf{i}),
\end{equation*}
and
\begin{equation*}
    |\varphi'_{\mathbf{i}}|=|\varphi'_{\mathbf{i}|_{n}}(\mathbf{i})|.
\end{equation*}
The following lemma is well known consequence of the Ruelle-Perron-Frobenius theorem \cite{Bow}, and it follows from Proposition 1.3 and Theorem 1.6 of \cite{Fan}.

\begin{lemma}\label{lemma:gibbs}
There exists a unique $\sigma$-invariant probability measure $\nu$ on $\Sigma$, and a constant $C>1$ such that for any $x\in F$, $\mathbf{i}\in\Sigma$ and $n\in\N$, we have
\begin{equation*}
    C^{-1}p_{\mathbf{i}|_n}(\mathbf{i})\leq \nu([\mathbf{i}|_n])\leq Cp_{\mathbf{i}|_n}(\mathbf{i}).
\end{equation*}
Furthermore, $\nu$ is quasi-Bernoulli and we have $\mu=\pi_*\nu$, where $\mu$ is the measure of Proposition \ref{prop:invmeas}. 
\end{lemma}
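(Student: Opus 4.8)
The plan is to invoke the Ruelle--Perron--Frobenius (transfer operator) machinery, following the classical argument of Bowen \cite{Bow}, and then reinterpret the resulting Gibbs state on the symbolic side. First I would set up the potential: the place-dependent probabilities $\{p_i\}_{i\in\Lambda}$ induce a function $\psi\colon\Sigma\to\R$ by $\psi(\mathbf{i})=\log p_{i_1}(\pi(\sigma\mathbf{i}))$ (more precisely, so that $p_{\mathbf{i}|_n}(\mathbf{j})$ becomes a Birkhoff sum $\exp(S_n\psi)$ evaluated at the appropriate point), and I would check that $\psi$ is H\"older continuous on $\Sigma$ with respect to a standard metric $d_\theta$. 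This uses the Hölder continuity of each $p_i$ on $X$ together with the fact that $\pi\colon\Sigma\to F$ is Lipschitz (indeed Hölder) when the SSC holds, so that the cylinder $[\mathbf{i}|_n]$ has $\pi$-image of diameter comparable to $\norm{\varphi'_{\mathbf{i}|_n}}$, which decays exponentially; hence words agreeing on a long prefix have nearby potential values. The transfer operator $\mathcal L_\psi g(\mathbf{i})=\sum_{j\in\Lambda}e^{\psi(j\mathbf{i})}g(j\mathbf{i})$ is, up to the identification $g\leftrightarrow g\circ\pi^{-1}$, exactly the Ruelle operator $T$ of \eqref{eq:ruelle}, and the normalization $\sum_i p_i\equiv 1$ says precisely that $\mathcal L_\psi \mathbbm 1=\mathbbm 1$, i.e. the operator is already normalized with leading eigenvalue $1$ and leading eigenfunction the constant $1$.

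Next I would apply the Ruelle--Perron--Frobenius theorem to $\mathcal L_\psi$: since $\psi$ is Hölder and the full shift is topologically mixing, there is a unique $\sigma$-invariant probability measure $\nu$ on $\Sigma$ with $\mathcal L_\psi^*\nu=\nu$, and it is the Gibbs state for $\psi$, meaning there is a constant $C>1$ with
\begin{equation*}
    C^{-1}\le \frac{\nu([\mathbf{i}|_n])}{\exp\big(S_n\psi(\mathbf{k})\big)}\le C
\end{equation*}
for every $\mathbf{k}\in[\mathbf{i}|_n]$, and moreover $\nu$ is exponentially mixing, hence ergodic. The key point is then to translate $\exp(S_n\psi(\mathbf{k}))$ back into $p_{\mathbf{i}|_n}(\mathbf{j})$: by the Hölder bound on $\psi$, the Birkhoff sum $S_n\psi$ varies by at most a uniformly bounded amount over the cylinder $[\mathbf{i}|_n]$, so $\exp(S_n\psi(\mathbf{k}))\approx p_{\mathbf{i}|_n}(\mathbf{k})$ with a uniform multiplicative constant, and crucially $p_{\mathbf{i}|_n}(\mathbf{j})\approx p_{\mathbf{i}|_n}(\mathbf{k})$ for \emph{any} two $\mathbf{j},\mathbf{k}$ — this is where the compactness of $X$ and the bounded-distortion-type control on $\prod_k p_{i_k}$ (again via Hölder continuity and the exponential shrinking of $\varphi_{\mathbf{i}|_k}(F)$) are used. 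Absorbing these constants into $C$ yields the two-sided inequality in the statement, valid for all $\mathbf{j}\in\Sigma$ and $x\in F$ simultaneously.

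For the ``furthermore'' part, the quasi-Bernoulli property of $\nu$ follows by combining the Gibbs bound with an almost-multiplicativity estimate for $p$: for $\mathbf{i}\in\Sigma_m$, $\mathbf{j}\in\Sigma_n$ one has $p_{\mathbf{i}\mathbf{j}|_{m+n}}(\mathbf{k})=p_{\mathbf{i}|_m}(\sigma^n\mathbf{k}\text{-shift})\cdot p_{\mathbf{j}|_n}(\mathbf{k})$ with the point at which the first factor is evaluated differing from a point in $[\mathbf{i}]$ only in a tail; by the H\"older/distortion control this changes the first factor by a bounded factor, so $\nu([\mathbf{i}\mathbf{j}])\approx p_{\mathbf{i}\mathbf{j}}\approx p_{\mathbf{i}}p_{\mathbf{j}}\approx\nu([\mathbf{i}])\nu([\mathbf{j}])$ with a uniform constant. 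Finally $\mu=\pi_*\nu$: the measure $\pi_*\nu$ is a Borel probability measure on $F$, and pushing the identity $\mathcal L_\psi^*\nu=\nu$ through $\pi$ (using that $\pi\circ\sigma_j=\varphi_j\circ\pi$ where $\sigma_j$ prepends the symbol $j$, and the SSC so that $\pi$ is a bijection and the inverse branches are exactly the $\varphi_j^{-1}$) shows $T^*(\pi_*\nu)=\pi_*\nu$; by the uniqueness in Proposition \ref{prop:invmeas} we conclude $\pi_*\nu=\mu$. I expect the main obstacle to be the careful bookkeeping in the translation step — verifying that $\exp(S_n\psi)$, $p_{\mathbf{i}|_n}(\mathbf{j})$ and $|\varphi'_{\mathbf{i}|_n}|$-weighted quantities are all comparable with constants independent of $n$, $\mathbf{i}$ and the base point — since this is where the Hölder continuity of the $p_i$, the bounded distortion property (Lemma \ref{lemma:BDP}), and the strong separation condition all have to be used in concert; the RPF theorem itself is a black box from \cite{Bow}.
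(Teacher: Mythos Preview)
Your proposal is correct and follows essentially the same route as the paper: define the H\"older potential $\psi(\mathbf{i})=\log p_{i_1}(\sigma\mathbf{i})$, observe that the normalization $\sum_i p_i\equiv 1$ forces $\mathcal L_\psi\mathbbm 1=\mathbbm 1$ so the pressure vanishes, invoke Bowen's Ruelle--Perron--Frobenius theorem \cite{Bow} to obtain the Gibbs measure $\nu$ with the two-sided bound, read off the quasi-Bernoulli property, and identify $\pi_*\nu$ with $\mu$ by showing it is $T^*$-invariant and appealing to the uniqueness in Proposition~\ref{prop:invmeas}. The paper's proof is slightly terser (it cites \cite[Theorem~1.16]{Bow} for $P=0$ and \cite[Proposition~1.3(ii)]{Fan} for $\mu=\pi_*\nu$), but the content is the same.
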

The previous lemma together with Proposition \ref{prop:doubling} immediately imply that the measure $\mu$ is doubling. We are now ready to state the main result of this section. Recall that $P(\Sigma)$ denotes the set of periodic words in $\Sigma$.

\begin{thm}\label{thm:pw_ssc_formula}
Let $\mu$ be a place dependent invariant measure fully supported on a self-conformal set $F$, which satisfies the SSC. Then
\begin{equation*}
    \dim_{\mathrm{A}}\mu=\sup_{\mathbf{i}\in P(\Sigma)}\frac{\log \overline{p}_{\mathbf{i}}}{\log |\varphi_{\mathbf{i}}'|}.
\end{equation*}
\end{thm}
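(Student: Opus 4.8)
The plan is to compute the global Assouad dimension $\dim_{\mathrm{A}}\mu$ directly from the definition, by reducing the ratio $\mu(B(x,R))/\mu(B(x,r))$ to symbolic quantities using the strong separation condition, and then showing the supremum over periodic points captures the worst-case exponent. First I would set up the dictionary between metric balls and cylinders: as in the proofs of Proposition \ref{prop:doubling} and Theorem \ref{thm:quasi-bernoulli-exact-assouad}, for $x=\pi(\mathbf{i})$ and $0<r<R$ one chooses $m<n$ with $\norm{\varphi'_{\mathbf{i}|_n}}\approx r$ and $\norm{\varphi'_{\mathbf{i}|_m}}\approx R$, so that (up to the uniform constants $C_3$ and $\delta/2C_3$ coming from the SSC) $B(x,r)\cap F$ and $B(x,R)\cap F$ are sandwiched between $\varphi_{\mathbf{i}|_n}(F)$, $\varphi_{\mathbf{i}|_{n\pm k}}(F)$ and $\varphi_{\mathbf{i}|_m}(F)$, $\varphi_{\mathbf{i}|_{m\pm k}}(F)$ for a fixed bounded overhead $k$ depending only on the separation data. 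Using $\mu(\varphi_{\mathbf{j}}(F))=\nu([\mathbf{j}])$, doubling of $\mu$ (Lemma \ref{lemma:gibbs} plus Proposition \ref{prop:doubling}), and the Gibbs bound $\nu([\mathbf{j}|_n])\approx p_{\mathbf{j}|_n}(\mathbf{j})$, the problem becomes: up to multiplicative constants,
\begin{equation*}
    \frac{\mu(B(x,R))}{\mu(B(x,r))}\approx \frac{\nu([\mathbf{i}|_m])}{\nu([\mathbf{i}|_n])}=\frac{1}{\nu([\sigma^m\mathbf{i}|_{n-m}])}\approx \frac{1}{p_{(\sigma^m\mathbf{i})|_{n-m}}(\sigma^m\mathbf{i})},
\end{equation*}
while $R/r\approx \norm{\varphi'_{\mathbf{i}|_m}}/\norm{\varphi'_{\mathbf{i}|_n}}\approx 1/\norm{\varphi'_{(\sigma^m\mathbf{i})|_{n-m}}}$ by Lemma \ref{lemma:chain}. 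Hence $\dim_{\mathrm{A}}\mu$ is, up to the negligible constants, the supremum over all finite words $\mathbf{w}$ (appearing as $(\sigma^m\mathbf{i})|_{n-m}$ for some $x\in F$, i.e. essentially over all of $\Sigma_*$) of $\log p_{\mathbf{w}}(\text{something})/\log|\varphi'_{\mathbf{w}}(\text{something})|$.

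Second, I would prove the two inequalities separately. For the lower bound $\dim_{\mathrm{A}}\mu\geq \sup_{\mathbf{i}\in P(\Sigma)}\log\overline p_{\mathbf{i}}/\log|\varphi'_{\mathbf{i}}|$: fix an $n$-periodic $\mathbf{i}$, put $x=\pi(\mathbf{i})$, and take $R'=\norm{\varphi'_{\mathbf{i}|_m}}$, $r'=\norm{\varphi'_{\mathbf{i}|_{m+kn}}}$ for large $k$ and a suitable fixed offset $m$. The point of passing to a periodic point and using Lemma \ref{lemma:power_equality} is exactly that it controls the exponential blow-up of the distortion constant flagged in the Remark after Lemma \ref{lemma:chain}: along the orbit of a periodic point one has the exact identity $|\varphi'_{\mathbf{i}|_{kn}}(x)|=|\varphi'_{\mathbf{i}|_n}(x)|^k=|\varphi'_{\mathbf{i}}|^k$, and similarly the place-dependent probabilities multiply along the period up to the fixed Hölder constant, giving $p_{\mathbf{i}|_{kn}}(\mathbf{i})\approx \overline p_{\mathbf{i}}^{\,k}$ (the Hölder continuity making the per-period fluctuation summable, not merely bounded). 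Then $\mu(B(x,R'))/\mu(B(x,r'))\gtrsim \overline p_{\mathbf{i}}^{-k}$ and $R'/r'\approx |\varphi'_{\mathbf{i}}|^{-k}$, so letting $k\to\infty$ forces $\dim_{\mathrm{A}}\mu\geq \log\overline p_{\mathbf{i}}/\log|\varphi'_{\mathbf{i}}|$; taking the supremum over periodic $\mathbf{i}$ gives one inequality.

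Third, for the upper bound $\dim_{\mathrm{A}}\mu\leq \sup_{\mathbf{i}\in P(\Sigma)}\log\overline p_{\mathbf{i}}/\log|\varphi'_{\mathbf{i}}|$: let $s$ be strictly larger than the right-hand supremum; I must show $\mu(B(x,R))\leq C\mu(B(x,r))(R/r)^s$ uniformly. By the reduction above it suffices to bound $p_{\mathbf{w}}(y)\geq c\,|\varphi'_{\mathbf{w}}(y)|^s$ for all finite words $\mathbf{w}$ and all $y\in X$ (with a uniform $c$). Here the standard trick is a subadditivity/pressure argument: the quantity $-\log\big(\sup_{\mathbf{w}\in\Sigma_\ell,y}p_{\mathbf{w}}(y)/|\varphi'_{\mathbf{w}}(y)|^s\big)$ is (up to additive constants from Hölder continuity and bounded distortion) superadditive in $\ell$, so its growth rate is governed by its infimum over periodic orbits — i.e. by $\sup_{\mathbf{i}\in P(\Sigma)}\big(\log\overline p_{\mathbf{i}}-s\log|\varphi'_{\mathbf{i}}|\big)$, which is $<0$ by the choice of $s$. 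This yields an exponential decay $\sup_{\mathbf{w}\in\Sigma_\ell}p_{\mathbf{w}}(y)/|\varphi'_{\mathbf{w}}(y)|^s\leq A\theta^\ell$ with $\theta<1$, which is more than enough for the required uniform bound (with room to absorb the bounded overhead $k$ and all the SSC constants). I expect the main obstacle to be this last step: making the passage "worst exponent over all finite words $=$ worst exponent over periodic words" rigorous, which requires carefully tracking that the Hölder fluctuations of $\log p_i$ and the bounded-distortion errors accumulate only additively (hence become negligible on the exponential scale) and invoking a Fekete/subadditive-pressure lemma together with a periodic-orbit approximation of the pressure, rather than a one-line estimate.
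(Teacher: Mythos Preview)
Your lower bound is essentially the paper's argument: fix a periodic $\mathbf{i}$, use Lemma \ref{lemma:power_equality} to get exact multiplicativity of $|\varphi'_{\mathbf{i}|_{kn}}(x)|$, and let $k\to\infty$. The paper actually routes this through $\updim_{\mathrm{loc}}(\mu,x)\geq t$ and Proposition \ref{prop:loc_less_assouad} rather than through the Assouad ratio directly, but the content is the same.

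Where you diverge from the paper is the upper bound, and there you are making it harder than it needs to be. You correctly reduce to a uniform estimate $p_{\mathbf{w}}(y)\gtrsim |\varphi'_{\mathbf{w}}(y)|^s$ over all finite words $\mathbf{w}$ and points $y$, and then propose to control this via subadditivity, Fekete's lemma, and a periodic-orbit approximation of the pressure. That programme can be made to work, but note two slips: the quantity you want to control is $\inf_{\mathbf{w},y}p_{\mathbf{w}}(y)/|\varphi'_{\mathbf{w}}(y)|^s$, not the $\sup$; and with $s$ strictly above the periodic supremum one has $\log\overline p_{\mathbf{i}}-s\log|\varphi'_{\mathbf{i}}|>0$ (both logarithms are negative), not $<0$. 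More to the point, the ``periodic-orbit approximation'' you flag as the main obstacle is in fact a one-line consequence of the Gibbs bound you already quoted in your setup. Lemma \ref{lemma:gibbs} says $\nu([\mathbf{w}])\approx p_{\mathbf{w}}(y)$ uniformly in $y$; applying this twice (once with $y$ on the orbit of $\mathbf{i}$, once with $y$ the periodic point $\mathbf{w}^\infty$) gives $p_{\mathbf{w}}(y)\approx \overline p_{\mathbf{w}^\infty}$ with a constant independent of $|\mathbf{w}|$, and similarly BDP gives $|\varphi'_{\mathbf{w}}(y)|\approx |\varphi'_{\mathbf{w}^\infty}|$. This is precisely what the paper does: for given $x=\pi(\mathbf{i})$ and $0<r<R$ with associated levels $n<k$, it sets $\mathbf{j}$ to be the periodic word whose period is the block $\sigma^{n}\mathbf{i}|_{k-n}$, invokes Lemma \ref{lemma:gibbs} to replace $\prod p_{i_j}(\sigma^{j-1}\mathbf{i})$ by $\overline p_{\mathbf{j}}$, and then uses the defining inequality $\overline p_{\mathbf{j}}\geq |\varphi'_{\mathbf{j}}|^s$ for $s$ equal to the periodic supremum. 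No subadditive limit, no $s>\sup$ and $s\to\sup$, no exponential decay estimate: the comparison with periodic data is immediate from the uniform Gibbs property. Your route and the paper's land in the same place, but the paper's direct periodization is considerably shorter and avoids the thermodynamic-formalism scaffolding.
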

\begin{proof}
Let us start with the upper bound. For the rest of the proof let $s=\sup_{\mathbf{i}\in P(\Sigma)}\frac{\log \overline{p}_{\mathbf{i}}}{\log |\varphi_{\mathbf{i}}'|}$. Let $x\in F$ and $\mathbf{i}\in\Sigma$, such that $\pi(\mathbf{i})=x$. Let $0<r<R$ and choose integers $k$ and $n$ which satisfy
\begin{equation*}
    |\varphi'_{\mathbf{i}|_{n+1}}(x)|\leq R < |\varphi'_{\mathbf{i}|_{n}}(x)|,\text{ and} \quad |\varphi'_{\mathbf{i}|_{k}}(x)|< r \leq |\varphi'_{\mathbf{i}|_{k-1}}(x)|.
\end{equation*}
This immediately implies that $\varphi_{\mathbf{i}|_k}(F)\subset B(x,Cr)$, where $C$ is the constant of Lemma \ref{lemma:BDP}. As before, let $\delta = \min_{i\ne j}\{d(\varphi_i(F),\varphi_j(F))\colon i\ne j\}$. Then it is also easy to see that $B(x,\frac{\delta}{2C}R)\cap F\subset \varphi_{\mathbf{i}|_n}(F)$. Let us set $\mathbf{j}=(i_{k-n+1},i_{k-n+2},\ldots,i_k,i_{k-n+1},i_{k-n+2},\ldots,i_k,\ldots)\in P(\Sigma)$. Note that Lemma \ref{lemma:gibbs} shows that
\begin{equation*}
    \prod_{j=k-n+1}^kp_{i_j}(\sigma^{j-1}\mathbf{i})\gtrsim \prod_{l=1}^np_{j_l}(\sigma^{l-1}\mathbf{j}).
\end{equation*}
Using Proposition \ref{prop:doubling} to conclude that $\mu$ is doubling and Lemmas \ref{lemma:BDP}(1) and \ref{lemma:chain}, we get
\begin{align*}
    \frac{\mu(B(x,R))}{\mu(B(x,r))}&\lesssim \frac{\mu(B(x,\frac{\delta}{2C}R))}{\mu(B(x,Cr))}\leq \frac{\mu(\varphi_{\mathbf{i}|_n}(F))}{\mu(\varphi_{\mathbf{i}|_k}(F))}\lesssim \frac{p_{\mathbf{i}|_n}(\mathbf{i})}{p_{\mathbf{i}|_k}(\mathbf{i})}\\
    &=\frac{\prod_{j=1}^np_{i_j}(\sigma^{j-1}\mathbf{i})}{\prod_{j=1}^kp_{i_j}(\sigma^{j-1}\mathbf{i})} =\left(\prod_{j=k-n+1}^kp_{i_j}(\sigma^{j-1}\mathbf{i})\right)^{-1}\\
    &\lesssim\left(\prod_{l=1}^np_{j_l}(\sigma^{l-1}\mathbf{j})\right)^{-1}= |\varphi'_{\mathbf{j}|_n}(\mathbf{j})|^{-\frac{\log \prod_{l=1}^np_{j_l}(\sigma^{l-1}\mathbf{j})}{\log |\varphi'_{\mathbf{j}|_n}(\mathbf{j})|}}\\
    &\leq|\varphi'_{\mathbf{j}|_n}(\mathbf{j})|^{-s}\lesssim  |\varphi'_{\mathbf{i}|_{k-n-1}}(x)|^{-s}\lesssim\left(\frac{|\varphi'_{\mathbf{i}|_{n}}(x)|}{|\varphi'_{\mathbf{i}|_{k}}(x)|}\right)^s\lesssim\left(\frac{R}{r}\right)^s.
\end{align*}
This shows that $\dim_{\mathrm{A}}\mu\leq s$, for any $x\in F$.

For the lower bound, let $t<s$, and choose $\mathbf{i}\in P(\Sigma)$, such that
\begin{equation*}
    \frac{\log \overline{p}_{\mathbf{i}}}{\log |\varphi'_{\mathbf{i}|_{n}}(x)|}\geq t,
\end{equation*}
where $x=\pi(\mathbf{i})$ and $n$ is the period of $\mathbf{i}$. For every $k\in\N$ let $r_k=|\varphi_{\mathbf{i}_{kn}}'(x)|=|\varphi_{\mathbf{i}_{n}}'(x)|^k$, where the second equality follows from Lemma \ref{lemma:power_equality}. Using the SSC and Proposition \ref{prop:doubling} we get
\begin{align*}
    \mu(B(x,r_k))&\lesssim \mu(\varphi_{\mathbf{i}|_{kn}}(F))=\prod_{j=1}^{kn}p_{i_j}(\sigma^{j-1}\mathbf{i})\\
    &=\overline{p}_{\mathbf{i}}^k=r_k^\frac{k\log \overline{p}_{\mathbf{i}}}{k\log |\varphi_{\mathbf{i}|_{n}}'(x)|}\leq r_k^t.
\end{align*}
Taking logarithms and limits shows us that
\begin{equation*}
    \updim_{\text{loc}}(\mu,x)\geq t,
\end{equation*}
so in particular by Proposition \ref{prop:basic_properties}(2),  $\dim_{\mathrm{A}}\mu\geq t$. Taking $t\to s$ finishes the proof.
\end{proof}
Using the fact that the measure $\mu$ is quasi-Bernoulli, Theorem \ref{thm:quasi-bernoulli-exact-assouad} gives the following immediate corollary.
\begin{cor}\label{cor:invmeas-pw}
If $\mu$ is a place dependent invariant measure fully supported on a self-conformal set $F$ satisfying the SSC, then
\begin{equation*}
    \dim_{\mathrm{A}}(\mu,x)=\sup_{\mathbf{i}\in P(\Sigma)}\frac{\log \Bar{p}_{\mathbf{i}}}{\log|\varphi'_{\mathbf{i}}|},
\end{equation*}
for $\mu$-almost every $x\in F$
\end{cor}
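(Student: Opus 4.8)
The statement to prove is Corollary \ref{cor:invmeas-pw}, which combines Theorem \ref{thm:pw_ssc_formula} and Theorem \ref{thm:quasi-bernoulli-exact-assouad}.

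The plan is straightforward since both ingredients are already in hand. First I would invoke Lemma \ref{lemma:gibbs}, which tells us that the place dependent invariant measure $\mu$ equals $\pi_*\nu$ for a quasi-Bernoulli measure $\nu$ on $\Sigma$; hence $\mu$ is a quasi-Bernoulli measure fully supported on the self-conformal set $F$ in the sense defined just before Proposition \ref{prop:doubling}. Since $F$ satisfies the strong separation condition by hypothesis, Theorem \ref{thm:quasi-bernoulli-exact-assouad} applies and gives
\begin{equation*}
    \dim_{\mathrm{A}}(\mu,x)=\dim_{\mathrm{A}}\mu<\infty,
\end{equation*}
for $\mu$-almost every $x\in F$. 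Next I would apply Theorem \ref{thm:pw_ssc_formula}, whose hypotheses are exactly those of the corollary, to rewrite the right-hand side as
\begin{equation*}
    \dim_{\mathrm{A}}\mu=\sup_{\mathbf{i}\in P(\Sigma)}\frac{\log \overline{p}_{\mathbf{i}}}{\log |\varphi'_{\mathbf{i}}|}.
\end{equation*}
Chaining these two equalities yields the claimed formula for $\dim_{\mathrm{A}}(\mu,x)$ at $\mu$-almost every $x$.

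There is essentially no obstacle here: the only point requiring a word of care is checking that $\mu$ genuinely fits the definition of a quasi-Bernoulli measure on a self-conformal set, but this is precisely the content of Lemma \ref{lemma:gibbs} (the measure $\nu$ is quasi-Bernoulli and $\mu=\pi_*\nu$), so the verification is immediate. One should also note that $\mu$ is fully supported on $F$ — which follows since each $p_i$ takes values in $(0,1)$ and hence $\nu$ charges every cylinder — so that both theorems are applicable verbatim. The proof is therefore a two-line deduction, and I would present it as such.

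\begin{proof}
By Lemma \ref{lemma:gibbs}, we have $\mu=\pi_*\nu$ where $\nu$ is a quasi-Bernoulli measure on $\Sigma$, so $\mu$ is a quasi-Bernoulli measure fully supported on $F$. Since $F$ satisfies the SSC, Theorem \ref{thm:quasi-bernoulli-exact-assouad} gives $\dim_{\mathrm{A}}(\mu,x)=\dim_{\mathrm{A}}\mu$ for $\mu$-almost every $x\in F$. Combining this with the formula for $\dim_{\mathrm{A}}\mu$ from Theorem \ref{thm:pw_ssc_formula} yields the claim.
\end{proof}
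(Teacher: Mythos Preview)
Your proposal is correct and matches the paper's own argument exactly: the paper states the corollary as an immediate consequence of Theorem \ref{thm:quasi-bernoulli-exact-assouad}, using the quasi-Bernoulli property of $\mu$ established in Lemma \ref{lemma:gibbs}, together with the formula for $\dim_{\mathrm{A}}\mu$ from Theorem \ref{thm:pw_ssc_formula}. Your additional remark about full support is a reasonable bit of care, though the paper treats this as implicit.
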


\begin{huom}\label{huom:ssm}
By \cite[Theorem 2.4]{FH}, the Assouad dimension of a self-similar measure $\mu$ under the SSC is given by the formula
\begin{equation}\label{eq:ssc_formula}
    \dim_{\mathrm{A}}\mu=\max_{i\in\Lambda}\frac{\log p_i}{\log r_i},
\end{equation}
see Section \ref{sec:ssm} for definitions. Our Theorem \ref{thm:pw_ssc_formula} can be viewed as a generalization of this result. Indeed, an IFS consisting of similarities $\varphi_i$ with similarity ratios $r_i$ is a self-conformal IFS, with $|\varphi'_i(x)|=r_i$, for all $x\in F$. Moreover, when each $p_i(x)\equiv p_i$, the assumptions of Theorem \ref{thm:pw_ssc_formula}, and we have
\begin{equation*}
    \dim_{\mathrm{A}}\mu=\sup_{\mathbf{i}\in P(\Sigma)}\frac{\log \overline{p}_{\mathbf{i}}}{\log |\varphi'_{\mathbf{i}}|}=\sup_{\mathbf{i}\in \Sigma_*}\frac{\log p_{\mathbf{i}}}{\log r_{\mathbf{i}}}=\max_{i\in\Lambda}\frac{\log p_i}{\log r_i}.
\end{equation*}
This together with Corollary \ref{cor:invmeas-pw} gives the following slightly stronger version of \cite[Theorem 2.4]{FH}: If $\mu$ is a self-similar measure satisfying the SSC, then
\begin{equation*}
    \dim_{\mathrm{A}}(\mu,x)=\max_{i\in\Lambda}\frac{\log p_i}{\log r_i}=\dim_{\mathrm{A}}\mu,
\end{equation*}
for $\mu$-almost every $x\in F$.
\end{huom}

\section{Self-similar and self-affine measures}\label{sec:ssm}
In this section we will concentrate on two important IFS constructions: self-similar and self-affine measures. In the main results of this section, Theorems \ref{thm:osc_formula} and \ref{thm:bm_exact_dimensionality}, we establish the exact dimensionality property (\ref{eq:exact_dimensionality}), for doubling self-similar measures satisfying the open set condition, and for some self-affine measures supported on Bedford-McMullen sponges.

Let $F$ be a limit set of an IFS, as defined in the beginning of Section \ref{sec:qb-meas}, and attach to each $i\in\Lambda$ a probability $p_i\in(0,1)$, such that $\sum_{i\in\Lambda}p_i=1$. Recall that by \cite{H}, there exists a unique Borel probability measure $\mu$ fully supported on $F$ satisfying
\begin{equation*}
    \mu=\sum_{i\in\Lambda}p_i\mu\circ \varphi_i^{-1}.
\end{equation*}
When the contractions $\varphi_i$ are similarities or affine maps, $F$ is called a \emph{self-similar} or a \emph{self-affine set}, respectively, and $\mu$ is called a \emph{self-similar} or a \emph{self-affine measure}. If the maps $\varphi_i$ are similarities, we denote their similarity ratios by $r_i\in(0,1)$. In all of the proofs, we assume that $\diam(F)=1$, which does not result in loss of generality, since re-scaling the set does not affect its geometry.

Self-similar and self-affine sets and measures are perhaps the most important prototypical examples of fractal sets and measures. These classes have been well studied in the past decades, and substantial progress has been made in understanding their dimensional properties. See for example \cite{Hoch} for recent developments in the self-similar case and \cite{BHR, HR} for the self-affine case. To make their study easier, it is usual to impose some sort of separation conditions on the defining maps. The most common separation conditions are the SSC (see Section \ref{sec:qb-meas}) as well as the \emph{open set condition (OSC)}, which the set $F$ is said to satisfy if there exists an open set $U\subset \R^d$, such that $\varphi_i(U)\subset U$ for all $i\in\Lambda$ and $\varphi_i(U)\cap \varphi_j(U)=\emptyset$ for $i\ne j$. We say that a self-similar measure fully supported on a self-similar set $F$ satisfies the SSC if $F$ does and similarly for the OSC.

\subsection{Self-similar measures and the open set condition}
The doubling properties of self-similar measures are quite well studied. The fact that self-similar measures satisfying the SSC are doubling follows, for example, from Proposition \ref{prop:doubling}, and as mentioned in Remark \ref{huom:ssm}, their Assouad dimension was explicitly computed by Fraser and Howroyd \cite{FH}, and it is given by the formula (\ref{eq:ssc_formula}). Slightly surprisingly, relaxing the SSC to the OSC changes the situation dramatically. Yung \cite{Yung} provides examples of self-similar sets satisfying the OSC for which (1) only the canonical self-similar measure is doubling, (2) all self-similar measures are doubling, (3) the measures are doubling for some (non-canonical) but not all choices of the weights $p_i$. In particular this shows that the Assouad dimension of self-similar measures satisfying the OSC can in many cases be infinite. Still, it is an interesting question to study the Assouad dimension of \emph{doubling} self-similar measures which do not satisfy the SSC. In the main theorem of this section, we show that if a self-similar measure satisfying the OSC is doubling, then the Assouad dimension is given by the natural formula (\ref{eq:ssc_formula}). Furthermore, we show that the pointwise Assouad dimension agrees with the global Assouad dimension almost everywhere, obtaining a stronger version of Remark \ref{huom:ssm}.

In this section, we use the notation
\begin{equation*}
    c_{\mathbf{i}|_n}=\prod_{k=1}^nc_{i_k},
\end{equation*}
for any parameters $c_i$, with $i\in\Lambda$. Recall that we may construct a Bernoulli measure $\nu$ on $\Sigma$ by setting for all $\mathbf{i}\in\Sigma_n$,
\begin{equation*}
    \nu([\mathbf{i}])=p_{\mathbf{i}},
\end{equation*}
and extending this to the whole space $\Sigma$ in the usual way. There is a natural correspondence between the Bernoulli measure $\nu$ and the self-similar measure on $F$, namely
\begin{equation}\label{eq:bernoulli}
    \mu=\pi_*\nu,
\end{equation}
where $\pi\colon \Sigma\to F$ is the coding map given by (\ref{eq:code_map}). The proof of the next theorem, which is our main result of this section, builds on ideas of \cite{Yung} and \cite{FH}.

\begin{thm}\label{thm:osc_formula}
Let $\mu$ be a self-similar measure satisfying the OSC. If $\mu$ is doubling, then
\begin{equation*}
    \dim_{\mathrm{A}}\mu=\max_{i\in\Lambda}\frac{\log p_i}{\log r_i},
\end{equation*}
and for $\mu$-almost every $x\in F$, we have
\begin{equation*}
    \dim_{\mathrm{A}}(\mu,x)=\dima\mu.
\end{equation*}

\end{thm}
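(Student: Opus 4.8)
The plan is to adapt the strongly-separated argument (Proposition~\ref{prop:doubling} and Theorems~\ref{thm:pw_ssc_formula}, \ref{thm:quasi-bernoulli-exact-assouad}) to the open set condition, with the extra input that $\mu$ is \emph{assumed} doubling — this assumption replaces the separation-based geometric estimates that were automatic under the SSC. First I would reduce to the global formula $\dim_{\mathrm{A}}\mu=\max_{i\in\Lambda}\tfrac{\log p_i}{\log r_i}$; once that is in hand, the pointwise statement $\dim_{\mathrm{A}}(\mu,x)=\dim_{\mathrm{A}}\mu$ for $\mu$-a.e.\ $x$ will follow by essentially the same scheme as Theorem~\ref{thm:quasi-bernoulli-exact-assouad}: a typical point $x=\pi(\mathbf{i})$ with $\mathbf{i}\in\mathcal N$ sees, at appropriate pairs of scales $R'\approx r_{\mathbf{i}|_l}$ and $r'\approx r_{\mathbf{i}|_{l+n}}$, a rescaled copy of whatever bad scale configuration witnesses a value close to $\dim_{\mathrm{A}}\mu$ elsewhere, because $\mathbf{i}$ contains every finite word; combined with Proposition~\ref{prop:loc_less_assouad} this pins down the a.e.\ value. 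Lemma~\ref{lemma:quasi-normal} (applied to the Bernoulli measure $\nu$ of~(\ref{eq:bernoulli}), which is certainly quasi-Bernoulli) guarantees $\nu(\mathcal N)=1$, hence the a.e.\ statement transfers to $\mu$ via $\mu=\pi_*\nu$.

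For the lower bound on $\dim_{\mathrm{A}}\mu$ I would use the same periodic-point mechanism as in Theorem~\ref{thm:pw_ssc_formula}: for the index $i_0$ achieving $\max_i\tfrac{\log p_i}{\log r_i}$, look at the fixed point $x_0$ of $\varphi_{i_0}$ and the balls $B(x_0,r_{i_0}^k)$; since $\varphi_{i_0^k}(F)\subset B(x_0, r_{i_0}^k)$ one gets $\mu(B(x_0,r_{i_0}^k))\le p_{i_0}^k$ (the OSC causes no trouble for this containment, which needs no separation), so $\updim_{\mathrm{loc}}(\mu,x_0)\ge \tfrac{\log p_{i_0}}{\log r_{i_0}}$ and Proposition~\ref{prop:loc_less_assouad} gives $\dim_{\mathrm{A}}\mu\ge\max_i\tfrac{\log p_i}{\log r_i}$. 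The upper bound is the genuinely new part. Under the OSC one still has, for $x=\pi(\mathbf{i})$ and scales $r<R$, a natural word length $n=n(x,R)$ with $r_{\mathbf{i}|_{n+1}}\le R<r_{\mathbf{i}|_n}$ so that $\varphi_{\mathbf{i}|_n}(F)\subset B(x,cR)$ for a dimensional constant $c$; the problem is that $B(x,R)$ need not be contained in a single cylinder of comparable size — nearby cylinders from other branches of the tree can intrude. This is exactly where the doubling hypothesis is spent: following Yung's bounded-overlap counting for the OSC, a ball $B(x,R)$ meets only boundedly many cylinders $\varphi_{\mathbf{j}}(F)$ with $\diam\varphi_{\mathbf{j}}(F)\approx R$, and for each such $\mathbf{j}$ one can compare $\mu(\varphi_{\mathbf{j}}(F))$ to $\mu$ of a ball genuinely inside it; doubling then lets one pass from these interior balls back up to $B(x,R)$ while losing only a uniform multiplicative constant. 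After this reduction, $\tfrac{\mu(B(x,R))}{\mu(B(x,r))}$ is controlled, up to constants and up to finitely many competing branches, by ratios $\tfrac{p_{\mathbf{j}|_n}}{p_{\mathbf{j}|_k}}=\big(\prod_{j=k+1}^n p_{i_j}\big)^{-1}$, each of which is $\le \big(\tfrac{R}{r}\big)^{\max_i (\log p_i/\log r_i)}$ by the elementary estimate $\prod p_{i_j}\ge \prod r_{i_j}^{\,s}$ with $s=\max_i \tfrac{\log p_i}{\log r_i}$, since $p_i\ge r_i^{s}$ termwise.

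The main obstacle I anticipate is making the ``$B(x,R)$ is covered by boundedly many comparable cylinders, each containing a ball of comparable measure'' step fully rigorous without the SSC. Under the OSC one has an open set $U$ with disjoint images $\varphi_i(U)$, and the standard argument (e.g.\ as in Hutchinson's packing lemma, used by Yung) shows that the cylinders $\varphi_{\mathbf{j}}(U)$ of diameter comparable to $R$ and meeting $B(x,R)$ have uniformly bounded overlap and bounded cardinality; but $\mu$ lives on $F$, not on $U$, and $\varphi_{\mathbf j}(F)$ may touch the boundary of $\varphi_{\mathbf j}(U)$, so one needs to know that each $\varphi_{\mathbf j}(F)$ contains a ball of radius $\gtrsim R$ (so that doubling can be invoked to estimate $\mu(\varphi_{\mathbf j}(F))$ from below by $\mu$ of that ball, and then from above by $\mu(B(x,cR))$ after a bounded chain of doublings connecting the ball's center to $x$). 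Here I would use that $F$ has nonempty interior relative to nothing — rather, one uses that $F=\overline{\varphi_{\mathbf j}(F)}$ scaled, $F$ is a fixed compact set, and $\mu(\varphi_{\mathbf j}(F))=p_{\mathbf j}$, so what is really needed is a chaining estimate: for any two points $y,z\in F$ at distance $\lesssim R$ there is a doubling-constant-controlled bound $\mu(B(y,R))\approx \mu(B(z,R))$, which follows from the doubling property applied $O(1)$ times along a path. Assembling these pieces — bounded cardinality of competing branches from the OSC, and the doubling-chaining to replace the now-false inclusion $B(x,R)\cap F\subset\varphi_{\mathbf{i}|_m}(F)$ — is the crux; everything after that is the same symbolic bookkeeping as in Theorems~\ref{thm:pw_ssc_formula} and~\ref{thm:quasi-bernoulli-exact-assouad}.
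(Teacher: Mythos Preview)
Your upper-bound strategy is essentially the paper's: cover $B(x,R)$ by the boundedly many cylinders $\varphi_{\mathbf j}(F)$ with $r_{\mathbf j}\approx R$ that meet it (OSC gives the cardinality bound), and then use Yung's doubling/chaining result to compare each $p_{\mathbf j}$ to $p_{\mathbf i|_{k-1}}$, where $\mathbf i$ codes $x$. The paper invokes this as \cite[Theorem~1.1]{Yung} rather than re-deriving it, but your description of the mechanism is accurate.

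There is, however, a genuine error in your lower-bound argument. From the containment $\varphi_{i_0^k}(F)\subset B(x_0,r_{i_0}^k)$ you can only conclude $\mu(B(x_0,r_{i_0}^k))\ge \mu(\varphi_{i_0^k}(F))\ge p_{i_0}^k$, which is the wrong direction for bounding $\updim_{\mathrm{loc}}(\mu,x_0)$ from below. What you need is an \emph{upper} bound $\mu(B(x_0,r_{i_0}^k))\lesssim p_{i_0}^k$, and under the OSC this is precisely the nontrivial step: the ball can meet cylinders from other branches, and $\mu(\varphi_{\mathbf j}(F))$ need not equal $p_{\mathbf j}$. So ``the OSC causes no trouble for this containment, which needs no separation'' is false --- the lower bound needs exactly the same bounded-overlap plus Yung chaining machinery as the upper bound. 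The paper in fact reuses that machinery to get $\mu(B(x,r_n))\le MCp_{\mathbf i|_{k+n}}$ in the lower-bound step.

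Relatedly, your plan to import the pointwise statement wholesale from Theorem~\ref{thm:quasi-bernoulli-exact-assouad} is not quite clean: that proof relies on the SSC to pass between balls and cylinders (the inclusions $B(x,\tfrac{\delta}{2C_3}R)\cap F\subset\varphi_{\mathbf i|_n}(F)$), which fails here. The paper instead argues the almost-sure lower bound directly: for $\mu$-a.e.\ $x=\pi(\mathbf i)$ the word $\mathbf i$ contains arbitrarily long runs $i_0^n$ of the maximising letter, and at the corresponding scales $R_n=r_{\mathbf i|_k}$, $r_n=r_{\mathbf i|_{k+n}}$ one has $\mu(B(x,R_n))\ge p_{\mathbf i|_k}$ (trivial containment) and $\mu(B(x,r_n))\le MCp_{\mathbf i|_{k+n}}$ (Yung again), so the ratio is $\gtrsim p_{i_0}^{-n}=(R_n/r_n)^s$. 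Once you have the Yung estimate in hand you can make your route work too, but as written both the fixed-point argument and the appeal to Theorem~\ref{thm:quasi-bernoulli-exact-assouad} have gaps at exactly the point where separation was being silently used.
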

\begin{proof}
Let $s=\max_{i\in\Lambda}\frac{\log p_i}{\log r_i}$. We start by showing that $\dima\mu\leq s$. Let $x\in F$, $0<r<R<1$ and let $\iii\in\Sigma$, be a (not necessarily unique) word satisfying $\pi(\iii)=x$. Choose integers $k$ and $l$, such that $r_{\mathbf{i}|_k}\leq R < r_{\mathbf{i}|_{k-1}}$ and $\quad r_{\mathbf{i}|_{l+1}}< r \leq r_{\mathbf{i}|_{l}}$.
We may assume from this point on that $l>k$, since otherwise $\frac{R}{r}$ would be bounded from above by a uniform constant, which does not bother us. Now $\varphi_{\mathtt{i}_{l+1}}(F)\subset B(x,r)$, so in particular $\mu(B(x,r))\geq p_{\mathbf{i}|_{l+1}}$. Define $\Lambda_{x,R}=\{\mathbf{j}\in\Sigma_*\colon r_{\mathbf{j}}\leq R < r_{\mathbf{j}^{-}}, \,d(x,\varphi_{\mathbf{j}}(F))\leq R\}$. Note that the OSC implies that there is a constant $M\geq 1$ independent of $x$ and $R$, such that
\begin{equation*}
    \#\Lambda_{x,R}\leq M.
\end{equation*}
For the simple proof of this see \cite[Proposition 1.5.8]{Kig}. By definition, for every $\mathbf{j}\in\Lambda_{x,R}$ we have $\diam(\varphi_{\mathbf{j}}(F))=r_{\mathbf{j}}\leq R < r_{\mathbf{i}|_{k-1}}$,
and since $x\in \varphi_{\mathbf{i}|_{k-1}}(F)$, this implies that $d(\varphi_{\mathbf{i}|_{k-1}}(F),\varphi_{\mathbf{j}}(F))\leq R < r_{\mathbf{i}|_{k-1}}$.
Combining these estimates we see that $\varphi_{\mathbf{j}}(F)\subset B(\varphi_{\mathbf{i}|_{k-1}}(F),2r_{\mathbf{i}|_{k-1}})$, where $B(\varphi_{\mathbf{i}|_{k-1}}(F),2r_{\mathbf{i}|_{k-1}})$ denotes the open $2r_{\mathbf{i}|_{k-1}}$-neighbourhood of the set $\varphi_{\mathbf{i}|_{k-1}}(F)$. Therefore, since $\mu$ is doubling, we may apply Theorem 1.1 of \cite{Yung} to see that there is a constant $C>0$, such that
\begin{equation*}
    p_{\mathbf{j}}\leq Cp_{\mathbf{i}|_{k-1}}
\end{equation*}
holds independently from $\mathbf{j}$ and $\mathbf{i}$.
Furthermore, it is clear that
\begin{equation*}
    B(x,R)\cap F\subset\bigcup_{\mathbf{j}\in\Lambda_{x,R}}\varphi_{\mathbf{j}}(F),
\end{equation*}
so we may estimate
\begin{align*}
    \frac{\mu(B(x,R))}{\mu(B(x,r))}&\leq \frac{\sum_{\mathbf{j}\in\Lambda_{x,R}}p_{\mathbf{j}}}{p_{\mathbf{i}|_{l+1}}}\leq MC\frac{p_{\mathbf{i}|_{k-1}}}{p_{\mathbf{i}|_{l+1}}}=\frac{MC}{p_{i_k}p_{i_l}}\frac{p_{\mathbf{i}|_k}}{p_{\mathbf{i}|_l}}\\
    &\leq \frac{MC}{p_{\min}^2}\left(p_{i_{l-k+1}}p_{i_{l-k+2}}\cdots p_{i_l}\right)^{-1}\\
    &\leq \frac{MC}{p_{\min}^2}\left(r_{i_{l-k+1}}^{\frac{\log p_{i_{k-l+1}}}{\log r_{i_{k-l+1}}}}r_{i_{l-k+2}}^{\frac{\log p_{i_{k-l+2}}}{\log r_{i_{k-l+2}}}}\cdots r_{i_l}^{\frac{\log p_{i_{l}}}{\log r_{i_{l}}}}\right)^{-1}\\
    &\leq\frac{MC}{p_{\min}^2}\left(\frac{r_{\mathbf{i}|_{k}}}{r_{\mathbf{i}|_l}}\right)^{s}
    \leq  \frac{MC}{p_{\min}^2}\left(\frac{R}{r}\right)^{s},
\end{align*}
which is enough to show that $\dim_{\mathrm{A}}\mu\leq s$.

To finish the proof, it is enough to show that the lower bound holds for the pointwise Assouad dimension at almost every point. For this, let $i\in\Lambda$ be the index maximizing $\frac{\log p_i}{\log r_i}$ and define $\mathcal{N}_n=\{\mathbf{i}\in\Sigma\colon (i,\ldots,i)\sqsubset \mathbf{i}\}$ and subsequently $\mathcal{N}=\bigcap_{n\in\N}\mathcal{N}_n$. Pick $x\in \pi(\mathcal{N})$ and note that as a special case of Lemma \ref{lemma:quasi-normal}, we have that $\pi(\mathcal{N})$ is a set of full measure. Let $\mathbf{i}\in \mathcal{N}$ be a (not necessarily unique) sequence such that $\pi(\mathbf{i})=x$. Now for any $n\in\N$ there is an integer $k$ such that
\begin{equation*}
    \mathbf{i}=(i_1,\ldots,i_k,\underbrace{i,i,\ldots,i}_n,i_{k+n+1},\ldots).
\end{equation*}
Choose $R_n = r_{\mathbf{i}|_{k}}$ and $r_n=r_{\mathbf{i}|_{k+n}}$, so $\varphi_{\smalli_{k}}(F)\subset B(x,R_n)$, and thus
\begin{equation*}
    \mu(B(x,R_n))\geq\mu(\varphi_{\smalli_{k}}(F))=p_{\mathbf{i}|_{k-1}},
\end{equation*}
and by calculations similar to above,
\begin{equation*}
    \mu(B(x,r_n))\leq MC p_{\mathbf{i}|_{k+n}}.
\end{equation*}
Therefore
\begin{align*}
    \frac{\mu(B(x,R_n))}{\mu(B(x,r_n))}\geq \frac{1}{MC}p_i^{-n}=\frac{1}{MC}(r_i^{-n})^{s}= \frac{1}{MC}\left(\frac{R_n}{r_n}\right)^{s}.
\end{align*}
Since $\frac{R_n}{r_n}\to\infty$ as $n\to \infty$, this shows that $\dim_{\mathrm{A}}(\mu,x)\geq s$. This finishes the proof, since now at $\mu$-almost every $x$, we have $s\leq \dima(\mu,x)\leq \dima\mu\leq s$.
\end{proof}
\begin{huom}
It is an interesting question, if the same formula (\ref{eq:ssc_formula}) for the Assouad dimension of self-similar measures works with even less restrictive separation conditions, such as the weak separation condition.
\end{huom}

\subsection{Self-affine measures on Bedford-McMullen sponges}

A result similar to Theorem \ref{thm:quasi-bernoulli-exact-assouad} also holds for self-affine measures on very strongly separated Bedford-McMullen sponges, which we define as follows. We work in $\R^d$, with $d\geq 2$. Start by choosing integers $n_1<n_2<\ldots<n_d$, and after that choose a subset $\Lambda\subset \prod_{q=1}^d\{0,\ldots,n_q - 1\}$. The set $\Lambda$ is the code space associated with the Bedford-McMullen sponge. For all $\bar{\imath}=(i_1,i_2,\ldots,i_d)\in\Lambda$, we define an affine transform $\varphi_{\bar{\imath}}:[0,1]^d\to[0,1]^d$ by
\begin{equation*}
    \varphi_{\bar{\imath}}(x_1,\ldots,x_d)=\left(\frac{x_1+i_1}{n_1},\ldots,\frac{x_d+i_d}{n_d}\right).
\end{equation*}
The limit set of this IFS is called a \emph{Bedford-McMullen carpet} if $d=2$ or a \emph{Bedford-McMullen sponge} if $d>2$. With this construction, we associate a probability vector $(p_{\bar{\imath}})_{\bar{\imath}\in\Lambda}$, and define the self-affine measure $\mu$ on $F$ as usual. Recall that $\mu$ is related to a Bernoulli measure $\nu$ on the code space $\Sigma$ by (\ref{eq:bernoulli}). To establish bounds for the measures of balls, we need a separation condition which is strictly stronger than the strong separation condition. Following Olsen \cite{O}, we say that a Bedford-McMullen sponge $F$ satisfies the \emph{very strong separation condition (VSSC)}, if for words $(i_1,\ldots,i_d),(j_1,\ldots,j_d)\in\Lambda$ satisfying $i_k=j_k$, for all $k=1,\ldots, q-1$, and $i_q\ne j_q$, for some $q=1,\ldots,d$, we have $|i_q-j_q|>1$.
We also need the following quantity. For $q=1,\ldots,d$ and $\bar{\imath}=(i_1,\ldots,i_d)$, define
\begin{equation}\label{eq:cond_prob}
    p_q(\bar{\imath})=p(i_q|i_1,\ldots,i_{q-1})=\dfrac{\sum\limits_{\substack{\bar{\jmath}\in \Lambda\\ j_k=i_k,\,k=1,\ldots,q}}p_{\bar{\jmath}}}{\sum\limits_{\substack{\bar{\jmath}\in \Lambda\\ j_k=i_k,\,k=1,\ldots,q-1}}p_{\bar{\jmath}}},
\end{equation}
if $(i_1,\ldots,i_q,i_{q+1},\ldots,i_d)\in\Lambda$ for some $i_{q+1},\ldots,i_d$, and 0 otherwise. These numbers can be interpreted as the conditional probabilities that the $q$th digit of a randomly chosen member of $\Lambda$ equals the $q$th digit of $\bar{\imath}$, given that the first $q-1$ coordinates did. The following theorem was proved by Fraser and Howroyd \cite[Theorem 2.6]{FH}.

\begin{thm}\label{thm:bm_formula}
Let $\mu$ be a self-affine measure on a Bedford-McMullen sponge satisfying the VSSC. Then
\begin{equation*}
    \dim_{\mathrm{A}}\mu=\sum_{q=1}^d\max_{\bar{\imath}\in \Lambda}\frac{-\log p_q(\bar{\imath})}{\log n_q}.
\end{equation*}
\end{thm}
Again, we extend this result and prove that the pointwise Assouad dimension coincides with this value at almost every point.

\begin{thm}\label{thm:bm_exact_dimensionality}
Let $\mu$ be a self-affine measure on a Bedford-McMullen sponge $F$ satisfying the VSSC. Then
\begin{equation*}
    \dim_{\mathrm{A}}(\mu,x)=\dim_{\mathrm{A}}\mu,
\end{equation*}
for $\mu$-almost every $x\in F$.
\end{thm}
For the proof we need the concept of \emph{approximate cubes} introduced by Olsen \cite{O}. For clarity, we use $\omega$ to represent members of the set $\Sigma$ instead of $\mathbf{i}$ which we used in the self-similar case. We denote the approximate cube of level $k\in\N$ centered at $\omega=(\bar{\imath}_1,\ldots)=((i_{1,1},\ldots,i_{1,d}),\ldots)\in\Sigma$ by $Q_k(\omega)$, and define it by
\begin{equation*}
    Q_k(\omega)=\{\omega'=(\bar{\jmath}_1,\ldots)\in\Sigma\colon j_{t,q}=i_{t,q},\,\forall q=1,\ldots,d\text{ and }\forall t=1,\ldots L_q(k)\},
\end{equation*}
where $L_q(k)$ is the unique number that satisfies
\begin{equation}\label{eq:approx_cube}
   n_q^{-L_q(k)-1} <n_1^{-k}\leq n_q^{-L_q(k)}.
\end{equation}
The geometric equivalent of the approximate cube $Q_k(\omega)$ is its image under the projection map $\pi:\Sigma\to \R^d$. The image $\pi(Q_k(\omega))$ is contained in
\begin{equation*}
    \prod_{q=1}^d\left[\frac{i_{1,q}}{n_q}+\ldots+\frac{i_{L_q(k),q}}{n_q^{L_q(k)}},\frac{i_{1,q}}{n_q}+\ldots+\frac{i_{L_q(k),q}}{n_q^{L_q(k)}}+\frac{1}{n_q^{L_q(k)}}\right],
\end{equation*}
which is a hypercuboid in $\R^d$ with all side lengths comparable to $n_1^{-k}$.

Olsen \cite{O} observed that the measure of an approximate cube is given by
\begin{equation}\label{eq:appr_cube_msr}
    \mu(\pi(Q_k(\omega)))=\prod_{q=1}^d\prod_{j=0}^{L_q(k)-1}p_q(\sigma^j\omega),
\end{equation}
where $\sigma:\Sigma \to \Sigma$ is the left-shift and $p_q(\omega)=p(i_{1,q}|i_{1,1},\ldots,i_{1,q-1})$, where the right hand side is as in equation (\ref{eq:cond_prob}). Recall also that a Bernoulli measure on the code space $\Sigma$ is shift invariant.

The following proposition by Olsen shows that we can approximate the balls centered at the Bedford-McMullen sponges by approximate cubes of comparable size.

\begin{prop}\label{prop:olsenvssc}
Let $\omega\in\Sigma$ and $k\in\N$.
\begin{enumerate}
    \item If the VSSC is satisfied, then $B(\pi(\omega),2^{-1}n_1^k)\cap F\subset \pi(Q_k(\omega))$.
    \item  $\pi(Q_k(\omega))\subset B(\pi(\omega),(n_1+\ldots+n_d)n_1^k)$.
\end{enumerate}
\end{prop}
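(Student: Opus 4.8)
The plan is to prove both containments by comparing the side lengths and positions of the hypercuboid $\pi(Q_k(\omega))$ with the ball $B(\pi(\omega),\cdot)$, using the defining inequalities $n_q^{-L_q(k)-1}<n_1^{-k}\le n_q^{-L_q(k)}$ (equivalently $n_1^{-k}\le n_q^{-L_q(k)}<n_q\cdot n_1^{-k}$). Throughout, write $x=\pi(\omega)=(x_1,\dots,x_d)$ and recall that the $q$th coordinate of any point of $\pi(Q_k(\omega))$ lies in the interval $I_q=\big[\sum_{t=1}^{L_q(k)}i_{t,q}n_q^{-t},\,\sum_{t=1}^{L_q(k)}i_{t,q}n_q^{-t}+n_q^{-L_q(k)}\big]$, which has length $n_q^{-L_q(k)}$, while $x_q$ itself also lies in $I_q$ (since $x_q=\sum_{t\ge 1}i_{t,q}n_q^{-t}$ and the tail $\sum_{t>L_q(k)}i_{t,q}n_q^{-t}\le n_q^{-L_q(k)}$).

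For part (2): if $y=(y_1,\dots,y_d)\in\pi(Q_k(\omega))$ then $y_q,x_q\in I_q$, so $|y_q-x_q|\le n_q^{-L_q(k)}< n_q\cdot n_1^{-k}$ for each $q$. Hence $|y-x|\le\sum_{q=1}^d|y_q-x_q|< (n_1+\dots+n_d)n_1^{-k}$ (one may even use the $\ell^2$ norm and the bound $\sqrt{d}\max_q n_q\cdot n_1^{-k}$, but the stated $\ell^1$-type estimate is cleaner and matches the paper's convention), giving $\pi(Q_k(\omega))\subset B(x,(n_1+\dots+n_d)n_1^{-k})$. Note this direction uses nothing about separation — it is purely the size of the cuboid.

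For part (1), which is the main obstacle, the point is to show that no point of $F$ outside $\pi(Q_k(\omega))$ can be within $\tfrac12 n_1^{-k}$ of $x$; this is exactly where the VSSC is needed. Suppose $y=\pi(\omega')\in F$ with $\omega'=(\bar\jmath_1,\dots)$ and $y\notin\pi(Q_k(\omega))$. Then there is a coordinate $q$ and a level $t\le L_q(k)$ with $j_{t,q}\ne i_{t,q}$; take the least such level $t$ (in the natural order on the symbolic digits), so that the words of $\omega$ and $\omega'$ agree on everything feeding into the first $t-1$ ``$q$-digits'' and the full digit-words $\bar\imath_1,\dots,\bar\imath_{t-1}$ agree. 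Applying the VSSC at the first symbol index where $\omega$ and $\omega'$ genuinely differ gives $|j_{t,q}-i_{t,q}|>1$, i.e.\ $|j_{t,q}-i_{t,q}|\ge 2$. Since the first $t-1$ digits in coordinate $q$ coincide, the $q$th coordinates $x_q$ and $y_q$ have the same leading terms $\sum_{s<t}i_{s,q}n_q^{-s}$, and then
\begin{equation*}
    |x_q-y_q|\ge |i_{t,q}-j_{t,q}|\,n_q^{-t}-\Big|\sum_{s>t}(i_{s,q}-j_{s,q})n_q^{-s}\Big|\ge 2n_q^{-t}-n_q^{-t}=n_q^{-t}\ge n_q^{-L_q(k)}\ge n_1^{-k},
\end{equation*}
where the tail bound uses $|i_{s,q}-j_{s,q}|\le n_q-1$. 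Thus $|x-y|\ge|x_q-y_q|\ge n_1^{-k}>\tfrac12 n_1^{-k}$, so $y\notin B(x,\tfrac12 n_1^{-k})$; contrapositively, $B(x,\tfrac12 n_1^{-k})\cap F\subset\pi(Q_k(\omega))$.

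\textbf{Main difficulty.} The delicate point in part (1) is correctly identifying the ``first place of disagreement'': one must translate ``$\omega'$ leaves the approximate cube $Q_k(\omega)$'' — a statement about finitely many coordinate-digits $i_{t,q}$ with $t\le L_q(k)$ — into a statement about the first index at which the symbolic sequences $\omega,\omega'\in\Sigma$ differ, so that the VSSC (a condition on pairs of $\Lambda$-letters sharing a common prefix) applies. Because the different coordinates are truncated at different levels $L_q(k)$, some bookkeeping is needed to check that the first genuine letter-disagreement indeed occurs at a symbol $\bar\imath_m$ with $m\le\max_q L_q(k)$ and produces a coordinate $q$ with $t=m\le L_q(k)$; once that is set up, the separation estimate above closes the argument. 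I would also remark that part (1) can fail without VSSC (ordinary SSC is not enough), which is the reason Olsen introduced this stronger condition.
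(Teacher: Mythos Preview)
The paper does not supply its own proof of this proposition; it simply cites Olsen \cite[Proposition 6.2.1]{O}. So there is nothing in the paper to compare against, and I assess your argument directly. Part~(2) is correct and standard.

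Part~(1) has a genuine gap. You take ``the least such level $t$'' witnessing $\omega'\notin Q_k(\omega)$ and then assert that ``the full digit-words $\bar\imath_1,\dots,\bar\imath_{t-1}$ agree'', so that VSSC may be applied at level $t$ in coordinate $q$. This inference fails: the first \emph{letter} disagreement $m$ between $\omega$ and $\omega'$ can be strictly smaller than the least \emph{witness} level, because the disagreement at level $m$ may lie only in a coordinate $q'$ with $m>L_{q'}(k)$, so that $(m,q')$ is not a witness for leaving $Q_k(\omega)$. Concretely, take $d=2$, $n_1=3$, $n_2=9$, $\Lambda=\{(0,0),(0,3),(2,0)\}$ (VSSC holds), $k=3$ (so $L_1=3$, $L_2=1$), $\omega=((0,0),(0,0),(0,0),\dots)$ and $\omega'=((0,0),(0,3),(2,0),(0,0),\dots)$. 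Here the first letter disagreement is at $m=2$ in coordinate $q'=2$, but $2>L_2(3)=1$; the only witness is $(t,q)=(3,1)$, yet $\bar\imath_2\ne\bar\jmath_2$. Your displayed chain $|x_q-y_q|\ge n_q^{-t}\ge n_q^{-L_q(k)}\ge n_1^{-k}$ therefore does not follow from what you wrote, and the ``bookkeeping'' you defer to in the final paragraph is precisely the missing content.

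A correct selection is: let $\tau_q$ be the first level at which the $q$th digits of $\omega,\omega'$ differ, and among those $q$ with $\tau_q\le L_q(k)$ choose one with $\tau_q$ minimal, breaking ties by taking the smallest such $q$. For any $q''<q$, if the $q''$th digits disagreed at level $\tau_q$ then $\tau_{q''}\le\tau_q\le L_q(k)\le L_{q''}(k)$, contradicting the minimal choice; hence $\bar\imath_{\tau_q}$ and $\bar\jmath_{\tau_q}$ agree in coordinates $1,\dots,q-1$, VSSC gives $|i_{\tau_q,q}-j_{\tau_q,q}|\ge 2$, and since the $q$th digits agree at all levels $<\tau_q$ one obtains $|x_q-y_q|\ge n_q^{-\tau_q}\ge n_q^{-L_q(k)}\ge n_1^{-k}$. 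Note that one does \emph{not} get (and does not need) agreement of the full letters below level $\tau_q$.
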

The proof of the proposition can be found in \cite[Proposition 6.2.1]{O}. Let us now prove Theorem \ref{thm:bm_exact_dimensionality}. The proof follows ideas of Fraser and Howroyd \cite[Theorem 2.6]{FH}, where they carefully construct sequences of points and scales, which give the desired exponent for the lower bound. The difficulty we face when compared to the approach in \cite{FH}, is that where they have the freedom to choose the point they consider for each pair of scales independently, we have to find a single point where we see the desired behaviour at arbitrarily small scales. Due to the non-conformality of the sponge, this essentially means that we not only need to find long enough sequences of convenient symbols in the symbolic space, but we also have to control their location within the word.

\begin{proof}[Proof of Theorem \ref{thm:bm_exact_dimensionality}]
First we note that $L_q(n)$ increases with $n$ and, since $n_q$ are strictly increasing, decreases with $q$. It is an elementary exercise to show that for every $k\in\N$, there is an integer $n_k$, such that for all $n\geq n_k$, we have
\begin{equation*}
    L_d(n)< L_d(n+k)<L_{d-1}(n)<L_{d-1}(n+k)<\ldots<L_1(n)<L_1(n+k).
\end{equation*}
For $q=1,\ldots,d$, let $p_q^{\min}=\min_{\bar{\imath}\in\Lambda}p_q(\bar{\imath})$, and let $\bar{\imath}_q^{\min}$ be some element of $\Lambda$ which achieves this minimum. Define for every $k\in\N$ the set
\begin{equation*}
    I_k=\bigcup_{n\geq n_k}\bigcap_{q=1}^{d}\sigma^{-L_q(n)}[\underbrace{\bar{\imath}_q^{\min},\ldots,\bar{\imath}_q^{\min}}_{L_q(n+k)-L_q(n)\text{ times}}].
\end{equation*}
Note that an element $\omega\in I_k$ has the form
\begin{align}
    \omega&=(\bar{\imath}_1,\ldots \bar{\imath}_{L_d(n)},\bar{\imath}_d^{\min},\ldots,\bar{\imath}_d^{\min},\bar{\imath}_{L_d(n+k)+1},\ldots,\bar{\imath}_{L_{2}(n)},\label{eq:omegaform}\\
    &\bar{\imath}_2^{\min},\ldots,\bar{\imath}_2^{\min}, \bar{\imath}_{L_2(n+k)+1},\ldots,\bar{\imath}_{L_1(n)},\bar{\imath}_1^{\min},\ldots,\bar{\imath}_1^{\min},\bar{\imath}_{L_1(n+k)+1},\ldots).\nonumber
\end{align}
It is also a simple exercise to show that if $\mathbf{i},\mathbf{j}\in\Sigma_*$, and $q,\ell\in \N$, such that $\ell > q+|\mathbf{i}|$, and $A,B\subset \Sigma$, with $A\subset \Lambda^q\times[\mathbf{i}]$ and $B\subset \Lambda^{\ell}\times[\mathbf{j}]$, then
\begin{equation}\label{eq:independent}
    \nu(A\cap B)=\nu(A)\nu(B).
\end{equation}
Now we choose $m_1=n_k$ and then inductively $m_i=L_1(m_{i-1}+k)+1$, for every $i>1$, and define $A_i\coloneqq\big(\bigcap_{q=1}^{d}\sigma^{-L_q(m_i)}[\underbrace{\bar{\imath}_q^{\min},\ldots,\bar{\imath}_q^{\min}}_{L_q(m_i+k)-L_q(m_i)\text{ times}}]\big)^c$. Noting that $I_k^c\subset\bigcap_{i\in\N}A_i$ and applying (\ref{eq:independent}) inductively first to the sets $A_i$ and then to the sets $\sigma^{-L_q(m_i)}[\underbrace{\bar{\imath}_q^{\min},\ldots,\bar{\imath}_q^{\min}}_{L_q(m_i+k)-L_q(m_i)\text{ times}}]$, we obtain
\begin{align*}
    \nu(I_k^c)&\leq\nu\left(\bigcap_{i\in\N}A_i\right)= \prod_{i\in\N}\nu(A_i)=\prod_{i\in\N}\Bigg(1-\nu\bigg(\bigcap_{q=1}^{d}\sigma^{-L_q(m_i)}[\underbrace{\bar{\imath}_q^{\min},\ldots,\bar{\imath}_q^{\min}}_{L_q(m_i+k)-L_q(m_i)\text{ times}}]\bigg)\Bigg)\\
    &=\prod_{i\in\N}\left(1-\prod_{q=1}^{d}(p_q^{\min})^{L_q(m_i+k)-L_q(m_i)}\right)\leq \prod_{i\in\N}\Bigg(\underbrace{1-(p_q^{\min})^d}_{<1}\Bigg)=0.
\end{align*}
Thus $\nu(I_k)=1$, and moreover
$\nu(I)=1$, where $I=\bigcap_{k\in\N}I_k$.

Now let $s=\dima\mu$ given by Theorem \ref{thm:bm_formula}, $x=\pi(\omega)$, where $\omega\in I$, and let $R_k=(n_1+\ldots+n_d)n_1^{-n-1}$, and $r_k=2^{-1}n_1^{-(n+k)-1}$, where $k$ and $n$ are chosen, such that $\omega$ is given by equation (\ref{eq:omegaform}). Observe that by Proposition \ref{prop:olsenvssc} and equations (\ref{eq:approx_cube}) and (\ref{eq:appr_cube_msr}), we have
\begin{align*}
    \frac{\mu(B(x,R_k))}{\mu(B(x,r_k))}&=\frac{\prod_{q=1}^d\prod_{j=0}^{L_q(n)}p_q(\sigma^j\omega)}{\prod_{q=1}^d\prod_{j=0}^{L_q(n+k)}p_q(\sigma^j\omega)}=\frac{1}{\prod_{q=1}^d\prod_{j=L_q(n)-1}^{L_q(n+k)}p_q(\sigma^j\omega)}\\
    &=\prod_{q=1}^d\left(\frac{1}{p_q^{\min}}\right)^{L_q(n+k)-L_q(n)+2}\geq \prod_{q=1}^d\left(\frac{1}{p_q^{\min}}\right)^{(n+k)\frac{\log n_1}{\log n_q}-n\frac{\log n_1}{\log n_q}+1}\\
    &\geq (p_q^{\min})^{-d}\prod_{q=1}^d\left(\frac{1}{p_q^{\min}}\right)^{k\frac{\log n_1}{\log n_q}}=(p_q^{\min})^{-d}\prod_{q=1}^d\left(n_1^k\right)^{\frac{-\log p_q^{\min}}{\log n_q}}\\
    &\geq (\min_{q}p_q^{\min})^{-d}\left(n_1^k\right)^{s}=C\left(\frac{R_k}{r_k}\right)^{s},
\end{align*}
where $C=(\min_{q}p_q^{\min})^{-d}\cdot(2(n_1+\ldots+n_d))^s>0$ is a constant. Taking $k\to\infty$, we see that $\frac{R_k}{r_k}\to \infty$, which is enough to prove that $\dim_{\mathrm{A}}(\mu,x)\geq s$. This holds for all $x=\pi(\omega)$, such that $\omega\in I$, where $I$ has full measure, proving the claim.
\end{proof}

\begin{example}\label{ex:bm_example}
    Here we give an example of a measure $\mu$, with $\updim_{\mathrm{M}}\mu<\dima(\mu,x)$, for $\mu$-almost every $x$. Let $\mu$ be a self-affine measure on a Bedford-McMullen carpet. By \cite[Theorem 8.6.2]{F}, the upper Minkowski dimension of $\mu$ is given, in the notation of Section \ref{sec:ssm}, by the formula
    \begin{equation*}
        \updim_{\mathrm{M}}\mu=\max_{\bar{\imath}\in\Lambda}\left(\frac{-\log p_{\bar{\imath}}}{\log n_2}\right)+\max_{\bar{\imath}\in\Lambda}\left(\frac{\log p_1(\bar{\imath})}{\log n_2}+\frac{-\log p_1(\bar{\imath})}{\log n_1}\right),
    \end{equation*}
    and from Theorems \ref{thm:bm_formula} and \ref{thm:bm_exact_dimensionality}, it follows that the pointwise Assouad dimension is given by
    \begin{equation*}
        \dima(\mu,x)=\max_{\bar{\imath}\in\Lambda}\left(\frac{-\log p_{\bar{\imath}}}{\log n_2}+\frac{\log p_1(\bar{\imath})}{\log n_2}\right)+\max_{\bar{\imath}\in\Lambda}\left(\frac{-\log p_1(\bar{\imath})}{\log n_1}\right),
    \end{equation*}
    at $\mu$-almost every $x$. By choosing the $p_{\bar{\imath}}$, for example in a way that $p_{\bar{\imath}}$ and $\frac{p_{\bar{\imath}}}{p_1(\bar{\imath})}$ are minimized in the same column, and $p_1(\bar{\imath})$ is minimized in a different column, we have
    \begin{equation*}
        \dima(\mu,x)>\updim_{\mathrm{M}}\mu,
    \end{equation*}
    for $\mu$-almost every $x$. For example, we may choose $n_1=3$ and $n_2=4$, and $\Lambda=\{(0,0),(0,3),(2,0)\}$, with $p_{(0,0)}=\frac{1}{8},p_{(0,3)}=\frac{5}{8}$ and $p_{(2,0)}=\frac{1}{4}$. Then we have
    \begin{equation*}
        \dima(\mu,x)=\frac{\log 6}{\log 4}+\frac{\log 4}{\log 3}> \frac{\log2}{\log 4}+\frac{\log 4}{\log 3}=\updim_{\mathrm{M}}\mu.,
    \end{equation*}
    for $\mu$-almost every $x$.
    \end{example}

\section{Discussion}
Most of the results of this paper follow a similar pattern by providing exact dimensionality properties for the pointwise Assouad dimension. A natural follow up to the results of this paper would be to conduct finer analysis of the pointwise Assouad dimension and develop tools for \emph{multifractal analysis} in this setting. Classically, the multifractal spectrum of a measure is given by the Hausdorff dimension of $\alpha$-level sets of the local dimension.
The celebrated multifractal formalism states that, in many cases, this spectrum is given by the Legendre transform of the $L^q$-spectrum of the measure, see e.g. Chapter 11 of \cite{Falc1} for details. Of course, a natural question to ask is if something similar is true for the dimension spectrum of the level sets of the pointwise Assouad dimension.

\begin{quest}
What is the multifractal Assouad spectrum of a strongly separated self-similar measure $\mu$? By this we mean quantity
\begin{equation*}
     f_{\mathrm{A}}(\alpha)\coloneqq \dim_{\mathrm{H}}\{x\in X\colon \dima(\mu,x)=\alpha\}.
\end{equation*}
\end{quest}
Using the Hausdorff dimension instead of the Assouad dimension in the definition is natural, since it is easy to see that each $\alpha$-level set of the pointwise Assouad dimensions is dense in the support and the Assouad dimension of sets is stable under closures.

\subsection*{Acknowledgements}
I would like to thank Antti Käenmäki and Ville Suomala for many fruitful conversations on the contents of the paper and Balázs Bárány who introduced me to the concept of invariant measures with place dependent probabilities during his visit at the University of Oulu. I would also like to thank the anonymous referee for carefully examining the manuscript and providing many detailed comments. Finally, I thank the referee, and Anders and Jana Björn for pointing out the reference \cite{BBL}. This research was partially funded by the Magnus Ehrnrooth foundation.

\bibliography{references}
\bibliographystyle{amsplain}

\end{document}